\DeclareMathOperator{\im}{Im}
\DeclareMathOperator{\poly}{poly}
\DeclareMathOperator{\tow}{tow}
\newcommand{\E}{\Bbb{E}}
\newcommand{\Z}{\Bbb{Z}}
\newcommand{\R}{\Bbb{R}}
\renewcommand{\P}{\Bbb{P}}
\newcommand{\TT}{\mathcal{T}}
\newcommand{\ep}{\epsilon}
\newtheorem*{rep@theorem}{\rep@title}
\newcommand{\newreptheorem}[2]{%
\newenvironment{rep#1}[1]{%
 \def\rep@title{#2 \ref{##1}}%
 \begin{rep@theorem}}%
 {\end{rep@theorem}}}
\numberwithin{equation}{section}
\renewenvironment{proof}[1][\proofname]{\par
  \vspace{-\topsep}
  \pushQED{\qed}%
  \normalfont
  \topsep0pt \partopsep0pt 
  \trivlist
  \item[\hskip\labelsep
        \itshape
    #1\@addpunct{.}]\ignorespaces
}{%
  \popQED\endtrivlist\@endpefalse
  \addvspace{6pt plus 6pt} 
}
\newtheorem{thm}{Theorem}
\newtheorem{result}{Result}[section]
\newtheorem{lem}[result]{Lemma}
\newtheorem{prp}[result]{Proposition}
\newtheorem{cor}[result]{Corollary}
\newtheorem{clm}[result]{Claim}
\theoremstyle{definition}
\newtheorem{rmk}[result]{Remark}
\newtheorem*{defn}{Definition}
\newtheorem{prob}{Problem}
\theoremstyle{remark}
\newcommand{\hide}[1]{}
\newcommand{\edit}[1]{}
\newcommand{\rough}[1]{}
\definecolor{darkgreen}{RGB}{75,150,75}
\newcommand{\review}[1]{}
\newcommand{\zh}[1]{\textcolor{blue}{zh: #1}}
\newcommand{\hides}[1]{}
\newcommand{\pub}[1]{}
\title{An improved construction for the triangle removal lemma}
\author{Zach Hunter}
\email{zach.hunter@math.ethz.ch}
\date{\today}
\begin{document}

\maketitle

\begin{abstract}
    We construct $n$-vertex graphs $G$ where $\epsilon n^2$ edges must be deleted to become triangle-free, which contain less than $\epsilon^{(C_{\text{new}}-o(1))\log_2 1/\ep}n^3$ triangles for $C_{\text{new}}= \frac{1}{4\log_2(4/3)} \approx 1.6601$. Previously, a bound of the same shape was known, but with $C_{\text{new}}$ replaced by $C_{\text{old}} := C_{\text{new}}/2$. Our construction uses ideas from additive combinatorics, drawing especially from the corners problem, but does not yield new bounds for those problems.
\end{abstract}

\section{Introduction}

We start by recalling the celebrated \textit{triangle removal lemma} (TRL).\begin{lem}[Triangle Removal Lemma] For every $\ep >0$, there exists $\delta= \delta(\ep)>0$, so that if an $n$-vertex graph $G$ contains $<\delta n^3$ triangles, then there exists a set of $<\ep n^2$ edges which intersects every triangle in $G$.
\end{lem}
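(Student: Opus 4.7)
The plan is to prove this via the classical Ruzsa--Szemerédi argument, applying Szemerédi's regularity lemma followed by the triangle counting lemma. This is not the most efficient proof for sharp quantitative bounds (indeed, the whole point of this paper is to study the quantitative dependence of $\delta$ on $\epsilon$), but it is the standard way to obtain \emph{some} admissible $\delta(\epsilon)$.

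First, I would fix auxiliary parameters: a regularity parameter $\eta = \eta(\epsilon)$ (to be tuned at the end) and a density threshold $d = d(\epsilon)$. Applying Szemerédi's regularity lemma to $G$ with parameter $\eta$, I obtain an equitable vertex partition $V_1, \dots, V_k$ into $k = k(\eta)$ parts such that all but at most $\eta \binom{k}{2}$ pairs $(V_i,V_j)$ are $\eta$-regular. Here $k$ is controlled by a tower-type function of $\eta$, which is the source of the poor dependence $\delta(\epsilon)$.

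Next, I would form a cleaned subgraph $G'$ by deleting three classes of edges: (a) edges inside some part $V_i$ (total $\le n^2/k$), (b) edges between irregular pairs (total $\le \eta n^2$), and (c) edges between pairs of density less than $d$ (total $\le d \cdot n^2/2$). Choosing $\eta, d, 1/k$ all $\le \epsilon/3$ ensures fewer than $\epsilon n^2$ edges are removed in total.

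The key step is to show that $G'$ cannot contain any triangle once $\delta$ is taken sufficiently small. Suppose for contradiction that $G'$ has a triangle with vertices in parts $V_i, V_j, V_\ell$ (necessarily distinct). By construction, each of the three pairs $(V_i,V_j), (V_j,V_\ell), (V_i,V_\ell)$ is $\eta$-regular with density at least $d$. The counting lemma for triangles across three regular pairs then yields at least $(1-o_\eta(1)) d^3 (n/k)^3$ triangles in $G$, which for $\eta$ small and $\delta \le c\, d^3 / k^3$ (for an appropriate absolute constant $c$) contradicts the hypothesis $t(G) < \delta n^3$. Hence $G'$ is triangle-free, as required.

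The main obstacle is conceptually minor but quantitatively severe: setting $\eta$ and $d$ of order $\epsilon$ forces $k$ to be tower-type in $1/\epsilon$, so $\delta$ comes out as roughly $1/\tow(\poly(1/\epsilon))$. Obtaining the sharper bound $\delta \le \epsilon^{C \log_2(1/\epsilon)}$ claimed in the abstract cannot be achieved by this argument; it requires the additive-combinatorial construction that the paper then develops, and the regularity-lemma proof is only meant to establish \emph{existence} of some $\delta(\epsilon) > 0$.
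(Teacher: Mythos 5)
Your proposal is the standard Ruzsa--Szemer\'edi regularity-lemma proof, and it is correct (the cleaning step, the observation that any surviving triangle must cross three regular, dense pairs, and the counting-lemma contradiction are all in order, modulo the usual care that $\eta$ be small relative to $d$ and that the regularity lemma be invoked with a lower bound on the number of parts so that $1/k\le\ep/3$). The paper does not prove this lemma itself --- it only recalls it and cites the original regularity-based proof (and Fox's quantitatively stronger one) --- so your argument is exactly the proof the paper is alluding to, and your closing remark correctly identifies that the tower-type loss here is why the paper's actual content concerns lower bounds on $1/\delta(\ep)$ rather than this upper-bound direction.
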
\noindent The original proof of TRL relied on Szemer\'edi's regularity lemma, which gave a very poor tower-type upper bound for $1/\delta(\ep)$ (i.e., we knew\footnote{Here, we define the tower function `$\tow(x)$' to be a tower of two's with height $\lfloor x\rfloor$, so $\tow(0) := 1$ and $\tow(t):= 2^{\tow(t-1)}$ for $t\ge 1$.} $1/\delta(\ep)\le \tow((1/\ep)^{O(1)})$). Later, a more quantitative upper bound was established by Fox \cite{fox}, who showed $1/\delta(\ep) \le \tow(O(\log(1/\ep)))$. 

Meanwhile, the best-known lower bounds for $1/\delta(\ep)$ are of shape $\exp(C\log^2(1/\ep))$, leaving a large gap. These bounds come from constructions for an even stricter problem.
\begin{defn}
    Given $n\ge 1$, let $\eta(n)$ denote $\max\{\frac{e(G)}{n^2}\}$, over all $n$-vertex graphs $G$ where every edge belongs to exactly $1$ triangle. 
\end{defn}\noindent It is immediate from definitions that $\delta(\ep)<\frac{1}{\max\{n:\eta(n)\ge 3\ep\}}$ (since such $G$ have $e(G)/3<n^2$ total triangles, and we must delete a distinct edge from each such triangle to become triangle-free). Thus for future reference, we note that if we show\footnote{See Subsection~\ref{notation} for our asymptotic notation.} $\eta(n) \gg \exp(-(c+o(1))\sqrt{\ln(n)})$ for some $c>0$, then we get that $1/\delta(\ep) \gg \exp(\frac{1-o(1)}{c^2} \ln^2(1/\ep))$.

To date, the strongest such lower bounds come from additive combinatorics, using variations of a construction due to Behrend \cite{behrend}. For decades, the best known lower bound was
\begin{equation}\label{behrend bound}
    \eta(n) \ge 2^{-(2\sqrt{2}+o(1))\sqrt{\log_2(n)}},
\end{equation}which comes from Behrend's construction of large subsets of $\{1,\dots,N\}$ that lack three term arithmetic progressions. We note that very recent work of the author joint with Elsholtz, Proske, and Sauermann improved upon Behrend's construction (constructing denser progression-free subsets of $\{1,\dots,N\}$), which allows one to replace this constant `$2\sqrt{2}$' by `$2\sqrt{\log_2(24/7)}$' \cite{EHPS}. This would have improved lower bounds for triangle removal, if not for some different progress which happened a few years prior.

Namely, progress was made on the so-called ``corners problem'' (which we define in a later section), allowing for the bound
\begin{equation}\label{green bound}
    \eta(n) \ge 2^{-(2\sqrt{2\log_2(4/3)}+o(1))\sqrt{\log_2 (n)}}.
\end{equation} \hide{The corners problem will be discussed in more detail later on, but it is another well-studied problem in additive combinatorics.} 
\noindent The corners problem is another well-studied problem in additive combinatorics. For many decades, the best bounds for the corners problem came from a direct reduction to the previous problem of avoiding three term arithmetic progressions, which yielded no improvement to Eq.~\ref{behrend bound}. But a few years ago, Green \cite{green} found a more efficient implementation of Behrend-style techniques yielding the state-of-the-art Eq.~\ref{green bound}; this refined a slightly earlier improvement to the corner problem by Linial and Shraibman \cite{linialshraibman}, who were inspired by connections to NOF communication complexity.

\hide{The Corners problem asks to construct large subsets of $\{1,\dots,N\}^2$, lacking triples of the form $(x,y),(x+d,y),(x,y+d)$ with $d$ a non-zero integer. This is another heavily studied problem in additive combinatorics. There is a reduction }

It is perhaps more enlightening to view the constructions from Behrend and Green respectively as each giving a `range of bounds'. Namely, their proofs naturally establish
\begin{equation}\label{behrend range}
    \eta(n) \gg D^{-O(1)}(1/2)^{D}n^{-2/D}
\end{equation}
\begin{equation}\label{green range}\eta(n)\gg D^{-O(1)}(3/4)^{D}n^{-2/D}\end{equation}for any choice of integer $D\ge 1$. By optimizing the parameter $D$, one recovers Eq.~\ref{behrend bound} and Eq.~\ref{green bound}. And it is now obvious why there is an improvement when one replaces the `Behrend range' Eq.~\ref{behrend range} by the `Green range' Eq.~\ref{green range}.

In this short note, we show that $1/\delta(\ep)\gg \exp(C\log^2(1/\ep))$ for an improved constant $C$. Namely, we shall establish a new range of bounds as follows:
\begin{thm}\label{new bounds}
    Given integers $D\ge 1$, $n> (2 D)^{100D}$, we have the bound:
    \[\eta(3n) \gg D^{-O(1)}(3/4)^{D/2}n^{-2/D}.\]
    \hide{Thus we have $\eta(n)\gg D^{-O(1)}(3/4)^{D/2} n^{-2/D}$ for such $n$.} Optimizing $D$,
    \[\eta(n) \gg 2^{-(2\sqrt{\log_2(4/3)}+o(1))\sqrt{\log_2(n)}}.\]
\end{thm}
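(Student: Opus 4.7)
The plan is to construct an $n$-vertex tripartite graph $G$ with vertex classes $V_1,V_2,V_3$ of equal size in which every edge lies in exactly one triangle; any such graph yields $\eta(3n)\ge e(G)/(3n)^2$, so the task reduces to making $e(G)/n^2$ as large as possible. Following the Ruzsa--Szemer\'edi, Behrend, and Green paradigm, I would let $V_1,V_2,V_3$ be shifted intervals (or cosets of a suitable abelian group) and define the three edge types by an additive rule built from a carefully chosen set $S$ inside a $D$-dimensional grid such as $[M]^D$. The rigidity ``each edge is in exactly one triangle'' will then be forced by a midpoint-uniqueness property of $S$ (in the Behrend case, the unique intersection of a line with a sphere; in the Green case, a more delicate packing property that buys the $(3/4)^D$ improvement).

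The new idea I would pursue is to replace the per-coordinate Green structure---whose density $(3/4)^D$ comes from a single condition imposed on each of $D$ coordinates independently---with a per-pair structure. Group the $D$ coordinates of the ambient grid into $D/2$ pairs and demand the Green-type midpoint condition only on each pair as a unit, at a cost of density $(3/4)$ per pair, giving $(3/4)^{D/2}$ overall. This is in spirit a tensor-doubling trick from additive combinatorics, akin to the way Green's own corners construction strengthens Behrend: within each pair, one coordinate is used to ``lock'' the midpoint condition while the other is free, enforcing rigidity across the pair without demanding the full Green loss on each coordinate. The resulting $S$ should then have density $\gg D^{-O(1)}(3/4)^{D/2}$ inside a grid of side length comparable to Green's, so that the usual tripartite construction yields $e(G)\gg n^2\cdot D^{-O(1)}(3/4)^{D/2}n^{-2/D}$, which is the target bound.

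The main technical obstacle will be preserving the rigidity property after relaxing the per-coordinate Green condition to a per-pair condition: weakening the local condition naively introduces many spurious 3-term APs inside $S$, and hence many spurious triangles in $G$, which would destroy the ``each edge in exactly one triangle'' property. To rule these out I would impose an auxiliary algebraic constraint coupling the two coordinates of each pair (playing the role of Behrend's quadratic sphere constraint but across pairs rather than dimensions), so that any 3-AP inside $S$ is forced to be trivial on every pair. The hypothesis $n>(2D)^{100D}$ is there to ensure that the ambient grid is comfortably larger than the constraints force, so that boundary rounding and error terms can be absorbed into the $D^{-O(1)}$ factor. Once the range bound $\eta(3n)\gg D^{-O(1)}(3/4)^{D/2}n^{-2/D}$ is in place, choosing $D\approx 2\sqrt{\log_2(n)/\log_2(4/3)}$ to balance the two factors yields the asymptotic form $\eta(n)\gg 2^{-(2\sqrt{\log_2(4/3)}+o(1))\sqrt{\log_2 n}}$.
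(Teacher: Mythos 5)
There is a genuine gap: the mechanism you propose for obtaining the $(3/4)^{D/2}$ gain is both unspecified and, as far as I can tell, incapable of reaching that exponent. Your plan is to tensor a fixed $2$-dimensional gadget across $D/2$ pairs of coordinates, paying density $3/4$ per pair. But any product construction of this kind can only achieve $c^{D/2}$ where $c$ is the additive-closure density of the $2$-dimensional gadget itself, i.e.\ $c=\P_{x,y\sim S_2}(x+y\in S_2)$ for your two-coordinate set $S_2$ of normalized measure $1$. By the Riesz rearrangement theorem this is maximized by the disk, for which the value is roughly $0.6$ --- strictly less than $3/4$ (the value $3/4$ is attained only in dimension one, by an interval). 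So per-pair tensoring would improve on Green's $(3/4)^D=(9/16)^{D/2}$ but falls short of $(3/4)^{D/2}$. The point you are missing is that the constant $(3/4)^{D/2}$ is an irreducibly high-dimensional phenomenon: the paper takes $X=Y=Z$ to be the Euclidean ball $B\subset\R^D$ of volume $n$ in \emph{all} $D$ coordinates at once, and the estimate $\P_{x,y\sim B}(x+y\in B)\gg D^{-O(1)}(3/4)^{D/2}$ comes from concentration of measure --- it reduces to $\P_{u,v\sim S^{D-1}}(\langle u,v\rangle\le -1/2)$, the measure of a spherical cap, which is $\approx(1-1/4)^{D/2}$. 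No coupling of coordinates in bounded-size blocks reproduces this.

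You are also overcomplicating the rigidity issue. In the paper, corner-freeness is not built into the dense set $S$ at all: one first takes $A_0=\{(x,y)\in X\times Y: x+y\in Z\}$ with $X,Y,Z$ discretized balls (so $|A_0|\gg D^{-O(1)}(3/4)^{D/2}n^2$), and then pigeonholes $A_0$ into the level sets of the Behrend coloring $(x,y)\mapsto\|x-y\|_2^2$, which are corner-free by the parallelogram law. This costs only a factor $1/(DM^2+1)$ with $M\approx n^{1/D}$, i.e.\ a factor absorbed into $D^{-O(1)}n^{-2/D}$, so density and rigidity are completely decoupled and no ``auxiliary algebraic constraint coupling the two coordinates of each pair'' is needed. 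The hypothesis $n>(2D)^{100D}$ is then used only to control the discretization of the ball (ensuring the radius is large enough that lattice-point counts match volumes up to a factor $2$), not to absorb errors from a per-pair constraint. As written, your proposal identifies the right reduction framework but not the idea that actually produces the theorem.
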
\noindent This important detail here of course, is that $\sqrt{3/4}>3/4$. Phrased in terms of the TRL, we see: 
\begin{thm}\label{main}
    We have that $\delta(\ep) \le \ep^{(C-o(1))\log_2(1/\ep)}$, for $C= \frac{1}{4\log_2(4/3)} \approx 1.6601$.
\end{thm}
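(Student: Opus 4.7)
The plan is to derive Theorem~\ref{main} as an immediate corollary of Theorem~\ref{new bounds} via the reduction already recorded in the introduction, namely that $\delta(\ep) < 1/\max\{n:\eta(n)\geq 3\ep\}$, which in its asymptotic form states: if $\eta(n)\gg \exp(-(c+o(1))\sqrt{\ln n})$ for some $c>0$, then $1/\delta(\ep)\gg \exp\bigl(\tfrac{1-o(1)}{c^2}\ln^2(1/\ep)\bigr)$. So the entire task reduces to converting the base-$2$ bound of Theorem~\ref{new bounds} into the base-$e$ form required by this reduction, and then converting the resulting bound on $\delta(\ep)$ back into the base-$2$ exponent appearing in Theorem~\ref{main}.

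First I would rewrite the conclusion of Theorem~\ref{new bounds}, namely $\eta(n)\gg 2^{-(2\sqrt{\log_2(4/3)}+o(1))\sqrt{\log_2 n}}$, in the form $\eta(n)\gg \exp(-(c+o(1))\sqrt{\ln n})$. Using $\sqrt{\log_2 n}=\sqrt{\ln n}/\sqrt{\ln 2}$ and multiplying by the factor $\ln 2$ that appears when one changes $2^{-x}$ to $\exp(-x\ln 2)$, the extracted constant is
\[
c \;=\; 2\sqrt{\log_2(4/3)}\cdot \sqrt{\ln 2} \;=\; 2\sqrt{\ln(4/3)},
\]
so that $c^2 = 4\ln(4/3)$.

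Next I would feed this $c$ into the reduction, obtaining
\[
\ln(1/\delta(\ep)) \;\geq\; \frac{1-o(1)}{4\ln(4/3)}\,\ln^2(1/\ep).
\]
To finish, I would convert to base $2$: dividing both sides by $\ln 2$ produces $\log_2(1/\delta(\ep))\geq \tfrac{1-o(1)}{4\log_2(4/3)}\log_2(1/\ep)\cdot \log_2(1/\ep)$, which is exactly
\[
\delta(\ep) \;\leq\; \ep^{(C-o(1))\log_2(1/\ep)}, \qquad C=\frac{1}{4\log_2(4/3)}.
\]

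There is no substantive obstacle at this stage: the content lies entirely in Theorem~\ref{new bounds}, and the only care needed here is bookkeeping the two changes of base (once when packaging $\eta$ in the $\exp/\sqrt{\ln}$ form the reduction expects, and once when repackaging $\delta$ in the $\ep^{\log_2(1/\ep)}$ form demanded by the theorem statement). If one wanted to avoid routing through the stated reduction, the same calculation can be performed directly: pick $n$ maximally with $\eta(n)\geq 3\ep$, whence Theorem~\ref{new bounds} forces $\log_2 n \gtrsim \log_2^2(1/\ep)/(4\log_2(4/3))$, and the inequality $\delta(\ep) < 1/n$ immediately yields the same bound.
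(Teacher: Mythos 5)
Your derivation is correct and is exactly the route the paper takes: Theorem~\ref{main} is obtained from Theorem~\ref{new bounds} via the reduction $\delta(\ep)<1/\max\{n:\eta(n)\ge 3\ep\}$ recorded in the introduction, and your change-of-base bookkeeping ($c=2\sqrt{\ln(4/3)}$, hence exponent constant $\tfrac{1}{c^2}=\tfrac{1}{4\ln(4/3)}$ in base $e$, i.e.\ $\tfrac{1}{4\log_2(4/3)}$ in base $2$) checks out. One side remark, about the paper rather than your argument: the numerical annotation in the statement is inconsistent with the formula, since $\tfrac{1}{4\log_2(4/3)}\approx 0.602$ while $1.6601\approx 4\log_2(4/3)$; your proof establishes the stated formula $C=\tfrac{1}{4\log_2(4/3)}$, which is the correct output of this reduction.
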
\noindent The previous best bound had a worse constant of $\frac{1}{2\log_2(4/3)}\approx 0.8301$.

Our argument is inspired by the reduction from the corners problem to TRL. As a corollary of our methods, one also gets following (which Thereom~\ref{new bounds} is implicitly a special case of).

\begin{prp}\label{abstract reduction} 
    Let $X,Y,Z,W$ be additive sets, where $W$ is $3$-AP-free\footnote{Here, we say an additive set is \textit{$3$-AP-free} if it does not contain any set of the form $\{x,x+d,x+2d\}$ with $d\neq 0$ (non-trivial arithmetic progressions of length $3$).}, and $|X|=|Y|=|Z|=n$. Then \[\eta(3n)\ge \frac{1}{9}\P_{x\sim X,y\sim Y}(x+y\in Z\text{ and }x-y\in W).\]
\end{prp}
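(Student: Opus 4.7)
The plan is to adapt the classical Ruzsa--Szemer\'edi-style reduction from the corners problem to triangle removal. First, I would define the subset $S := \{(x, y) \in X \times Y : x + y \in Z,\ x - y \in W\}$ of the ambient abelian group containing $X, Y, Z, W$, and observe that $S$ is \emph{corner-free}: if $(x, y), (x+d, y), (x, y+d)$ all lay in $S$ with $d \ne 0$, then $x - y - d$, $x - y$, $x - y + d$ would be a nontrivial $3$-term AP inside $W$, contradicting the hypothesis.

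Next, I would build the standard tripartite ``corners-to-triangles'' graph $H$ on vertex classes $V_1 := X$, $V_2 := Y$, $V_3 := Z$, each of size $n$. Declare $x \in V_1$ and $y \in V_2$ adjacent iff $(x, y) \in S$; declare $x \in V_1$ and $z \in V_3$ adjacent iff $(x, z - x) \in S$, equivalently $z - x \in Y$ and $2x - z \in W$; and symmetrically declare $y \in V_2$ and $z \in V_3$ adjacent iff $(z - y, y) \in S$. A triangle $(x, y, z) \in V_1 \times V_2 \times V_3$ then forces the three witnesses $w_1 := x - y$, $w_2 := 2x - z$, $w_3 := z - 2y$ to lie in $W$, and a short computation shows that $(w_2, w_1, w_3)$ is a $3$-term AP with common difference $z - (x + y)$. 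The $3$-AP-freeness of $W$ therefore pins down $z = x + y$, so triangles of $H$ are in bijection with $S$, and a direct inspection confirms each of the $3|S|$ edges lies in exactly one triangle.

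The conclusion is then arithmetic: $H$ has $3n$ vertices, $3|S|$ edges, and each edge in a unique triangle, giving $\eta(3n) \ge 3|S|/(3n)^2 = |S|/(3n^2) = \tfrac{1}{3}\, \P_{x \sim X,\, y \sim Y}(x + y \in Z \text{ and } x - y \in W)$. This is in fact stronger than the claimed factor of $\tfrac{1}{9}$, so the proposition follows a fortiori. The only genuine content in the argument is the corner-freeness of $S$; once that is in hand, everything else is the standard Ruzsa--Szemer\'edi collapse of triangles onto $S$, and the main (minor) obstacle is just being careful with the edge definitions so that each edge witnesses a unique element of $S$ and no ``stray'' edges appear.
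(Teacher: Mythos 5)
Your proposal is correct and takes essentially the same route as the paper: you identify the same corner-free set $S\subset X\times Y$ (ruling out corners via exactly the same $3$-AP $\{x-y-d,\,x-y,\,x-y+d\}$ in $W$) and then run the standard Ruzsa--Szemer\'edi tripartite construction, which the paper instead invokes as a black box via Lemma~\ref{general construction} and Claim~\ref{corners are disjoint}. Your inlined version even yields the slightly sharper constant $\tfrac13$ in place of $\tfrac19$, simply because the paper's general lemma lower-bounds the edge count by the number of triangles $|A|$ rather than by $3|A|$.
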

\begin{rmk}
    Note that here we do not assume any upper bound on the size of $W$, only that it is $3$-AP-free. So this suggests some sort of concentration phenomenon occurs, where if $x+y$ concentrates on a small set, then $x-y$ cannot concentrate on a $3$-AP-free set. 
    
    At first glance, this set-up sounds more abstract, so one might hope to get significantly better bounds for this problem. However, as we will show in the conclusion, if one assumes our additive sets live in a torsion-free group, then the RHS above has a polynomial relationship to the quantity $\frac{r_3(n)}{n}$ (where $r_3(n)$ denotes the maximum cardinality of a $3$-AP-free subset of $\{1,\dots,n\}$). We refer the reader to Proposition~\ref{limit of abstract} and the surrounding discussion for further details.
\end{rmk}
\subsection{Notation}\label{notation}

We use both Big O and Vinagradov notion. Given functions $f=f(n),g=g(n)$, we say $f =O(g),f\ll g, g=\Omega(f)$ to mean there exists an absolute constant $C>0$ so that $f(n)\le Cg(n)$ for all sufficiently large $n$. We write $f=o(g)$ if $f(n)/g(n)\to 0$ as $n\to \infty$.

For positive integer $n$, we write $[n]:=\{1,\dots,n\}$.

\section{Reductions}

The following lemma is a minor variant of the reduction from the corners problem to TRL.
\begin{lem}\label{general construction}
    Let $A$ be a set, $f_1,f_2,f_3$ be maps defined over $A$, so that $|f_i(A)|\le n$ for $i=1,2,3$. Also suppose that whenever $(a_1,a_2,a_3)\in A^3$ satisfies \[f_1(a_2)=f_1(a_3),f_2(a_3)=f_2(a_1),f_3(a_1)=f_3(a_2),\]  we have $a_1=a_2=a_3$. Then $\eta(3n) \ge |A|/9n^2$.
\end{lem}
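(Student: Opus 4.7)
The plan is to produce a tripartite graph $G$ on $3n$ vertices whose triangles are in bijection with $A$, and in which every edge lies in a unique triangle. Take three disjoint vertex sets $V_1, V_2, V_3$, each of size $n$, chosen so that $V_i \supseteq f_i(A)$ (padding with dummy vertices if $|f_i(A)|<n$). For each $a\in A$, draw the three edges $(f_1(a),f_2(a))\in V_1\times V_2$, $(f_1(a),f_3(a))\in V_1\times V_3$, and $(f_2(a),f_3(a))\in V_2\times V_3$; together they form a triangle $T(a)$ in $G$.

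The hypothesis about triples $(a_1,a_2,a_3)\in A^3$ is exactly what is needed to force the resulting graph to have ``every edge in exactly one triangle''. First, I would apply the hypothesis with cleverly chosen repetitions (e.g.\ $(a_1,a_2,a_3)=(a',a,a)$) to deduce that each of the three pair-projection maps $a\mapsto (f_i(a),f_j(a))$ is injective; so distinct elements of $A$ contribute distinct edges, and the total edge count of $G$ is exactly $3|A|$. Next, given an arbitrary triangle in $G$ on vertices $(v_1,v_2,v_3)\in V_1\times V_2\times V_3$, its three edges are witnessed by some $a_1,a_2,a_3\in A$ (with $a_i$ responsible for the edge avoiding $V_i$); the hypothesis, applied directly, forces $a_1=a_2=a_3$, so every triangle of $G$ equals $T(a)$ for a unique $a\in A$. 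Combining these, each edge of $G$ lies in exactly one triangle, namely $T(a)$ for the unique $a$ generating it.

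The graph $G$ then has $3n$ vertices, $3|A|$ edges, and each edge in a unique triangle, so by definition $\eta(3n)\ge 3|A|/(3n)^2 = |A|/(3n^2)$, which is stronger than (and hence implies) the stated bound $|A|/(9n^2)$. The only real content lies in the symmetric application of the hypothesis, so I do not anticipate any obstacle beyond the bookkeeping needed to verify injectivity in all three projection directions and uniqueness of the witnesses for each triangle.
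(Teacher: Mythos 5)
Your proof is correct and follows essentially the same route as the paper's: the same tripartite construction with edges coming from the triples $\{f_1(a),f_2(a),f_3(a)\}$, and the same two uses of the hypothesis (on degenerate triples such as $(a',a,a)$ to get injectivity of the pair-projections, and on the three edge-witnesses of an arbitrary tripartite triangle to get uniqueness). The factor-of-$3$ improvement you observe ($|A|/(3n^2)$ versus the stated $|A|/(9n^2)$) is also implicit in the paper's argument; the lemma simply records the weaker bound.
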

\begin{proof}
    For $i=1,2,3$, let $V_i := f_i(A)$. By relabelling, we may assume that these sets are disjoint. 

    Now, we define a tripartite 3-uniform hypergraph with vertex set $V:=V_1\cup V_2\cup V_3$ and with hyperedge set $\TT := \{\{f_1(a),f_2(a),f_3(a)\}:a\in A\}$. It is clear that $|V|\le 3n$. We further note that $|\TT| = |A|$. Indeed, our assumptions imply that $\{f_1(a),f_2(a),f_3(a)\} \neq \{f_1(a'),f_2(a'),f_3(a')\}$ for distinct $a,a'\in A$, by considering the tuple $(a,a,a')\in A^3$.

    Now consider the graph $G = (V,E)$, where $uv\in E$ if and only if $uv\subset T$ for some triple $T\in \TT$. Clearly, $G$ contains at least $ |\TT| = |A|$ triangles. Whence the claim will follow if we show that $G$ is ``diamond-free''\footnote{A graph is called diamond-free if no two triangles share an edge. The name is because if one draws the avoided subgraph (\( \tikz[baseline=(v1.base),scale=0.14]{
  \node[draw,circle,inner sep=0.7pt] (v1) at (1,0) {};
  \node[draw,circle,inner sep=0.7pt] (v2) at (2,1) {};
  \node[draw,circle,inner sep=0.7pt] (v3) at (1,2) {};
  \node[draw,circle,inner sep=0.7pt] (v4) at (0,1) {};
  \draw[line width=0.5pt] (v1) -- (v2) -- (v3) -- (v4) -- (v1);
  \draw[line width=0.5pt] (v2) -- (v4);
} \)) it looks (a bit) like a diamond.} (potentially adding dummy vertices if $|V|<3n$).

    We argue as follows. For any triple $v_1\in V_1,v_2\in V_2,v_3\in V_3$ which forms a triangle, we note there must exist $a_{12},a_{23},a_{31}\in A$ so that for $ij\in \binom{[3]}{2}$
    \[f_i(a_{ij})=v_i,f_j(a_{ij})=v_j\](meaning $a_{ij}$ is the reason why $v_iv_j$ is an edge of $G$). Then we clearly have
    \[f_1(a_{31}) = v_1=f_1(a_{12}),f_2(a_{12})=v_2=f_2(a_{23}),f_3(a_{23})=v_3=f_3(a_{31}).\] So we have a contradiction to our assumption unless $a_{12}= a_{23}=a_{31}$.

    This immediately shows that $G$ has exactly $|\TT|$ triangles (i.e., we did not create any additional triangles). It now suffices to check these triangles are edge-disjoint. This can be handled with similar reasoning. Indeed, if (say) $f_1(a) =f_1(a')$ and $f_2(a)=f_2(a')$, then taking $a_{12} = a'$ and $a_{23} = a_{31} =a$, we get $a_{12},a_{23},a_{31}$ (which are not all equal) that correspond to the edges of a triangle in $G$, which is a contradiction (by the reasoning above). 

    Since all the triangles in $G$ are edge-disjoint, we are done.
\end{proof}

Like with the corners problem, we will take $A\subset G^2$ to be an additive set (where $G$ is some group), and $f_1,f_2,f_3$ to be the homomorphisms
\[f_1:(x,y)\mapsto x,\quad f_2:(x,y)\mapsto y,\quad f_3(x,y)\mapsto x+y.\]We say $A$ is \textit{corner-free} if it does not contain any triple $(x,y),(x+d,y),(x,y+d)$ where $d\neq 0$.  

The corners problem asks to bound $r_{\angle}(n)$, the cardinality of the largest corner-free $A\subset [n]^2$. Combining Lemma~\ref{general construction} with Claim~\ref{corners are disjoint} (stated below) shows that $\eta(6n) \gg \frac{r_\angle(n)}{n^2}$ (because $|f_3([n]^2)|\le 2n$).
\begin{clm}\label{corners are disjoint}
    Let $f_1,f_2,f_3$ be defined as above. If $A$ is corner-free, then whenever $(a_1,a_2,a_3)\in A^3$ satisfies \[f_1(a_2)=f_1(a_3),f_2(a_3)=f_2(a_1),f_3(a_1)=f_3(a_2),\] that we have $a_1=a_2=a_3$.
\end{clm}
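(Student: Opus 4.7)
The plan is to unpack the hypothesized equalities coordinate by coordinate and recognize that they force $\{a_1,a_2,a_3\}$ to be a corner in $A$. Writing $a_i=(x_i,y_i)\in G^2$, the three assumptions become
\[x_2=x_3,\qquad y_3=y_1,\qquad x_1+y_1=x_2+y_2.\]
Introducing $d:=x_2-x_1$, the first equation upgrades to $x_3=x_1+d$ as well, and the third equation rearranges to $y_2=y_1-d$. So the three points can be rewritten as
\[a_1=(x_1,y_1),\qquad a_2=(x_1+d,\,y_1-d),\qquad a_3=(x_1+d,\,y_1).\]

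The next step is to observe that $a_3$ sits at the ``right angle'' of an L: both $a_1$ and $a_3$ share the $y$-coordinate $y_1$, while both $a_2$ and $a_3$ share the $x$-coordinate $x_1+d$. Setting $(x,y):=(x_1+d,y_1)=a_3$ and $s:=-d$, we see
\[a_3=(x,y),\qquad a_1=(x+s,\,y),\qquad a_2=(x,\,y+s),\]
so $\{a_1,a_2,a_3\}$ is precisely a corner based at $a_3$ with shift $s=-d$, and this corner is contained in $A$.

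Corner-freeness of $A$ then forbids $s\neq 0$, so we must have $d=0$; feeding this back gives $x_1=x_2=x_3$ and $y_1=y_2=y_3$, hence $a_1=a_2=a_3$, as required. There is no substantive obstacle here: the only thing to be careful about is bookkeeping the sign of $d$ and checking that the corner obtained after the substitution genuinely lies inside $A$ (which it does because all three of $a_1,a_2,a_3$ were assumed to come from $A$ to begin with).
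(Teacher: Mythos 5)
Your proof is correct and matches the paper's argument: both unpack the three equalities coordinate-wise, identify the triple as a corner based at $a_3$ with a nonzero shift unless the parameter $d$ vanishes, and conclude from corner-freeness. (Your write-up is in fact slightly cleaner than the paper's, which contains a typo writing $a_3=(x',y)$ where it means $a_1$.)
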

\begin{proof}
    Supposing $a_3 = (x,y)$, then we must have $a_2 = (x,y'), a_3 = (x',y)$ and $x+y'=x'+y$. Writing $d:= y'-y$, we get that $x'= x+d$. Since $A$ is corner-free, it follows that $d=0$, whence $a_1=a_2=a_3$ as desired.
\end{proof}

In this paper, the ambient group we will work with shall always be $\Z^D$ for an appropriately chosen $D$. Here we will make ample use of following handy observation, which is a variation on ideas dating back to Behrend \cite{behrend}.   

\begin{lem}\label{norm coloring}
    There exists a coloring $C:([M+1]^D)^2\to \{0,\dots,DM^2\}$ so that every color class is corner-free.
\end{lem}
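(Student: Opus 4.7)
The plan is to use the coloring $C(x,y) := \|x-y\|^2 = \sum_{i=1}^D (x_i - y_i)^2$, where $\|\cdot\|$ denotes the standard Euclidean norm on $\Z^D$. This is a pleasingly symmetric twist on Behrend's original trick of coloring by $\|x\|^2$: because $x-y$ \emph{shifts in opposite directions} when we replace $x$ by $x+d$ versus $y$ by $y+d$, the two resulting norm identities will point in opposite directions and add up to force $d=0$.

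First I would verify that $C$ lands in $\{0, 1, \dots, DM^2\}$. Since $x, y \in [M+1]^D$ means each coordinate $x_i, y_i$ lies in $\{1, \dots, M+1\}$, we have $x_i - y_i \in \{-M, \dots, M\}$, hence $(x_i - y_i)^2 \in \{0, 1, \dots, M^2\}$, and summing over the $D$ coordinates gives $C(x,y) \in \{0, 1, \dots, DM^2\}$, as required.

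Next, I would check that each color class is corner-free. Suppose $(x,y)$, $(x+d, y)$, and $(x, y+d)$ are three points of $([M+1]^D)^2$ all assigned the same color. Then on one hand, $\|x-y\|^2 = \|(x+d)-y\|^2$ expands to give $2(x-y)\cdot d + \|d\|^2 = 0$; on the other hand, $\|x-y\|^2 = \|x-(y+d)\|^2$ expands to give $-2(x-y)\cdot d + \|d\|^2 = 0$. Adding these two identities cancels the linear terms and yields $2\|d\|^2 = 0$, so $d = 0$. Hence there is no nontrivial corner inside any color class, and the lemma follows.

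There is no real obstacle here; the only thing to get right is the sign asymmetry between how $d$ enters via $x$ versus via $y$, which is precisely what makes $\|x-y\|^2$ (rather than, say, $\|x\|^2 + \|y\|^2$ or $\|x+y\|^2$) the correct invariant to use.
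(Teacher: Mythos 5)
Your proposal is correct and uses exactly the same coloring as the paper, $C(x,y)=\|x-y\|_2^2$; your step of adding the two expanded identities to cancel the linear terms is just the parallelogram law, which is how the paper phrases it (via the observation that a corner maps to a $3$-term arithmetic progression under $(x,y)\mapsto x-y$). No issues.
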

\begin{proof}
    Write $G:= ([M+1]^D)^2$ to denote our ``grid''. 

    First, we define the homomorphism \[\psi:(\Z^D)^2\to \Z^D;(\vec{x},\vec{y})\mapsto \vec{x}-\vec{y}.\]We have that $\psi(G) = \{-M,\dots,M\}^D$. We then define the coloring $C_0:\Z^D\to \Z; \vec{x}\mapsto ||\vec{x}||_2^2 = \sum_{i=1}^d x_i^2$. Finally, we shall take $C:= C_0\circ \psi$.

    It remains to confirm $C$ has the desired properties. Firstly, it is clear that $C(G) \subset\{0,\dots,DM^2\}$ (since $\psi(g)\in \Z^D$ has coordinates bounded by $M$ for all $g\in G$).

    Next we must rule out monochromatic corners. Fix some $g\in G$ and non-zero $d\in \Z^D$. We have that
    \[\psi(g+(d,0))-\psi(g) =\psi(g) -\psi(g+(0,d)) =d \neq 0\](meaning the corner $g,g+(d,0),g+(0,d)$ turns into a non-trivial $3$-term arithmetic progression under $\psi$). However, $C_0$ should not contain any non-trivial $3$-term arithmetic progressions.

    Indeed, let $P=\{x_0-d,x_0,x_0+d\}$ be an arithmetic progression in $\Z^D$. Then parallelogram law says that
    \[||x_0-d||_2^2+||x_0+d||_2^2 = 2||x_0||_0^2 +2||d||_2^2.\]So $C_0(x_0-d) =C_0(x_0+d) = C_0(x_0)$ implies that $||d||_2^2 =0$ (whence $P$ is trivial). This establishes $C_0$ does not have monochromatic $3$-term arithmetic progressions, which in turn implies $C$ does not have monochromatic corners, completing the proof.
\end{proof}

\begin{cor}\label{corner reduction}
    Let $X,Y\subset [M+1]^D$ and $Z\subset \Z^D$ be sets of size at most $n$. Then $\eta(3n) \ge \frac{\#(x\in X, y\in Y: x+y\in Z)/n^2}{DM^2+1}$. 

\end{cor}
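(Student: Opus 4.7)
The plan is to combine Lemma~\ref{general construction} with a pigeonhole argument driven by the coloring from Lemma~\ref{norm coloring}. First I would define the witness set
\[A \;:=\; \{(x,y)\in X\times Y : x+y\in Z\},\]
so that $|A|$ is exactly the count on the right-hand side of the corollary. Because $X,Y\subseteq [M+1]^D$, we have $A\subseteq ([M+1]^D)^2$, so the coloring $C$ from Lemma~\ref{norm coloring} partitions $A$ into at most $DM^2+1$ classes, each of which (being contained in a color class of $C$) is corner-free. By pigeonhole, some class $A_c\subseteq A$ satisfies $|A_c|\ge |A|/(DM^2+1)$ and, as a subset of a corner-free set, is itself corner-free.

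Next I would apply Lemma~\ref{general construction} to $A_c$ with the three projections $f_1(x,y)=x$, $f_2(x,y)=y$, $f_3(x,y)=x+y$. The image conditions $|f_i(A_c)|\le n$ are immediate: $f_1(A_c)\subseteq X$, $f_2(A_c)\subseteq Y$, and $f_3(A_c)\subseteq Z$, each of size at most $n$. The remaining hypothesis of Lemma~\ref{general construction}---that whenever $f_1(a_2)=f_1(a_3)$, $f_2(a_3)=f_2(a_1)$, $f_3(a_1)=f_3(a_2)$ one has $a_1=a_2=a_3$---is exactly what Claim~\ref{corners are disjoint} delivers for the corner-free set $A_c$. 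Chaining the resulting bound $\eta(3n)\gg |A_c|/n^2$ with the pigeonhole estimate $|A_c|\ge |A|/(DM^2+1)$ gives the desired inequality.

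There is no real obstacle here; the corollary is simply the two-step combination of ingredients already in hand. The only things to verify carefully are that corner-freeness is inherited by subsets (so that $A_c$ remains corner-free after the pigeonhole restriction) and that the projections of $A_c$ land inside $X$, $Y$, $Z$ respectively (immediate from the definition of $A$). Any small numerical constant incurred between the `9n^2' of Lemma~\ref{general construction} and the `n^2' of the stated corollary can be absorbed into the implicit constant, or equivalently sharpened by counting the $3|A_c|$ edges of the diamond-free graph produced in Lemma~\ref{general construction}.
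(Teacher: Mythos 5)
Your proposal is correct and follows essentially the same route as the paper: define the set of pairs $(x,y)\in X\times Y$ with $x+y\in Z$, pigeonhole over the coloring of Lemma~\ref{norm coloring} to extract a corner-free subset losing only a factor $DM^2+1$, and then apply Lemma~\ref{general construction} via Claim~\ref{corners are disjoint}. Your remark about the factor of $9$ from Lemma~\ref{general construction} versus the constant-free statement of the corollary is a fair observation (the paper glosses over it too, and it is harmless since the corollary is only ever used inside $\gg$-bounds).
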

\begin{proof}
    Let $A_0 \subset X\times Y$ be the set of pairs $(x,y)$ such that $x+y\in Z$. For $a=(x,y)\in A_0$, we set $f_1(a):= x,f_2(a):= y,f_3(a) := x+y$.

    Let $C$ be the coloring of $([M+1]^D)^2\supset X\times Y$ given by Lemma~\ref{norm coloring}. By pigeonhole, we can find $A\subset A_0$ of size $|A|\ge \frac{|A_0|}{|\im(C)|} = \frac{\#(x\in X, y\in Y: x+y\in Z)}{DM^2+1}$, so that $A$ is a corner-free subset. 

    From definitions we have $f_1(A)\subset X, f_2(A)\subset Y, f_3(A)\subset Z$; and by our assumptions we have $|X|=|Y|=|Z|=n$. Thus the result follows from Lemma~\ref{general construction} and Claim~\ref{corners are disjoint}.   \hide{ So, we just need to confirm that $A$ is corner-free. This is simply true for the same reasons as in the proof of Lemma~\ref{norm coloring}. Indeed, suppose there was some $(x,y)\in A$ and non-zero difference $d$ so that $\{(x,y),(x+d,y),(x,y+d)\}\subset A$. Then, from definitions, we get that $P:= \{x-(y+d),x-y,(x+d)-y\}\subset W$, contradicting that $W$ lacks progressions of length $3$. }
\end{proof}

Corollary~\ref{corner reduction} is what shall allow us to deduce Theorem~\ref{main} (yielding improved bounds for $\eta(n)$ and $\delta(\ep)$). We shall carry out this in the next section. But before doing so, let us show how the arguments in Lemma~\ref{norm coloring} and Corollary~\ref{corner reduction} can be slightly abstracted to yield Proposition~\ref{abstract reduction} (which we restate for convenience).

\begin{repprp}{abstract reduction}
    Let $X,Y,Z,W$ be additive sets, where $W$ is $3$-AP-free, and $|X|=|Y|=|Z|=n$. Then \[\eta(3n)\ge \frac{1}{9}\P_{x\sim X,y\sim Y}(x+y\in Z\text{ and }x-y\in W).\]
\end{repprp}
\begin{proof}
     Let $A\subset X\times Y$ be the set of pairs $(x,y)$ where $x+y\in Z$ and $x-y\in W$. For $a=(x,y) \in A$, we define $f_1(a) := x,f_2(a) := y,f_3(a):=x+y$.

     From definitions we have $f_1(A)\subset X,f_2(A)\subset Y,f_3(Z)\subset Z$, and so we get that these all have size at most $n$ (by assumption). Assuming $A$ is corner-free, one could then apply Lemma~\ref{general construction} to get $\eta(3n) \ge \frac{1}{9}\frac{|A|}{n^2}$ giving the desired lower bound (since $\frac{|A|}{n^2} = \P_{x\sim X,y\sim Y}(x+y\in Z\text{ and }x-y\in W)$).

     We are left to confirm that $A$ is indeed corner-free. But this is simple. Indeed, suppose that there was some $d\neq 0$ and $x_0,y_0$ so that $\{(x_0,y_0),(x_0+d,y_0),(x_0,y_0+d)\}\subset A$. Then we get that $\{(x_0-y_0),(x_0-y_0)+d,(x_0-y_0)-d\}\subset W$, contradicting that $W$ was $3$-AP-free.
\end{proof}

\section{The construction} 

With the preparation from the last section, we can easily asymptotically recover the previous bound from Eq.~\ref{green bound}. From the discussion in the introduction, it suffices to show that for any $n,D$, we have $\eta(3n)\gg D^{-1}(3/4)^D n^{-2/D}$. In what follows, we sketch the details, ignoring issues related to rounding for convenience. 

To do this, we simply write $M:= n^{1/D}$, $X=Y=Z= \{-M/2,\dots,M/2\}^D$. We can rewrite $\#(x\in X, y\in Y: x+y\in Z)/n^2$ as $\P_{x\sim X,y\sim Y}(x+y\in Z)$. One observes that \[\P_{x\sim X,y\sim Y}(x+y\in Z) = \P_{a,b\sim \{-M/2,\dots,M/2\}}(a+b\in \{-M/2,\dots,M/2\})^D \ge (3/4)^D.\]Translating $X,Y$ by $(M/2+1,\dots,M/2+1)$ (so they are now sets contained in $[M+1]^D$) and $Z$ by $(M+2,\dots,M+2)$, the probability does not change. Whence Proposition~\ref{corner reduction} gives that $\eta(3n) \ge \frac{(3/4)^D}{DM^2}$ (modulo rounding) as desired.

Our improved bound Theorem~\ref{main} essentially now boils down to showing that the box $[-1/2,1/2]^D$ is not optimally closed under addition. Instead, it will be better to consider the Euclidean ball. Formally, it is known that when $X,Y,Z\subset \R^d$ are subsets of measure one, that $\P_{x\sim X,y\sim Y}(x+y\in Z)$ is maximized when $X=Y=Z$ is the Euclidean ball around the origin (this follows from a result known as the Riesz rearrangement theorem). But we do not need to prove this, as we are merely concerned with lower bounds. 

Instead, we need the following bound. We defer its proof to Subsection~\ref{computations}, as these are not novel calculations.
\begin{prp}\label{additive closure of ball}
    Let $B$ be a Euclidean ball centered around the origin in $\R^D$. Then
    \[\P_{x,y\sim B}(x+y\in B)\gg D^{-O(1)} (3/4)^{D/2}.\]
\end{prp}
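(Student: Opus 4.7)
By rotational symmetry, write $x = r u$ and $y = s v$, where $r = |x|, s = |y| \in [0,1]$ have density $D r^{D-1} D s^{D-1}$ on $[0,1]^2$, and $u, v$ are independent and uniform on the unit sphere $S^{D-1}$. The condition $x+y \in B$ becomes $|x+y|^2 = r^2 + s^2 + 2rs\,\langle u, v\rangle \le 1$, i.e.\
\[
t := \langle u, v\rangle \le \tau(r,s) := \frac{1-r^2-s^2}{2rs}.
\]
The first step is to observe that $\tau(r,s) \ge -1/2$ for all $r,s \in (0,1]$. This is equivalent to $1 + rs \ge r^2 + s^2$, which for $r,s \in [0,1]$ is immediate from $1 - r^2 \ge s(s-r)$ when $s \le 1$ (and a symmetric argument), or simply by checking that $f(r,s) := 1 - r^2 - s^2 + rs$ has no interior critical points in $(0,1)^2$ and is nonnegative on the boundary of $[0,1]^2$. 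Consequently, for every choice of radii,
\[
\Pr(x + y \in B \mid r, s) \;=\; \Pr\!\left(t \le \tau(r,s)\right) \;\ge\; \Pr(t \le -1/2),
\]
so that $\Pr_{x,y \sim B}(x+y \in B) \ge \Pr(t \le -1/2)$. The problem has now been reduced to estimating one tail of the inner product of two random unit vectors in $\R^D$.

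Next, I will use the well-known fact that $t = \langle u, v\rangle$ has density $c_D (1-t^2)^{(D-3)/2}$ on $[-1,1]$, where $c_D = \Gamma(D/2)/(\sqrt{\pi}\,\Gamma((D-1)/2)) \asymp \sqrt{D}$ (by Stirling applied to the Beta function). Therefore,
\[
\Pr(t \le -1/2) \;=\; c_D \int_{1/2}^{1} (1-t^2)^{(D-3)/2}\, dt.
\]
The integrand attains its maximum on $[1/2,1]$ at the left endpoint with value $(3/4)^{(D-3)/2}$, so this already gives the matching upper bound; for the lower bound I will apply a standard Laplace-type estimate. Substituting $t = 1/2 + \epsilon$ with $\epsilon \ge 0$, one has $1 - t^2 = 3/4 - \epsilon - \epsilon^2$, and so $(1-t^2)^{(D-3)/2} \ge (3/4)^{(D-3)/2}\bigl(1 - (4/3)(\epsilon + \epsilon^2)\bigr)^{(D-3)/2}$. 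Restricting integration to $\epsilon \in [0, 1/D]$ (where the second factor is bounded below by an absolute constant) yields
\[
\int_{1/2}^{1}(1-t^2)^{(D-3)/2}\, dt \;\gg\; D^{-1} (3/4)^{D/2}.
\]
Combining with $c_D \asymp \sqrt D$ gives $\Pr(t \le -1/2) \gg D^{-1/2} (3/4)^{D/2}$, which is of the required shape $D^{-O(1)} (3/4)^{D/2}$.

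The main content of the argument is therefore the clean inequality $\tau(r,s) \ge -1/2$ on $[0,1]^2$, which shows that the probability to be estimated is at least the worst-case tail at $r = s = 1$; I do not anticipate any serious obstacle, as everything else is a textbook tail asymptotic for the Beta distribution. I will present the proof in this order: (i) the spherical-coordinates reduction to $\Pr(t \le -1/2)$; (ii) the verification that $\tau(r,s) \ge -1/2$; (iii) the Laplace-type lower bound on the tail.
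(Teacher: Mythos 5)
Your proposal is correct and follows essentially the same route as the paper: the inequality $\tau(r,s)\ge -1/2$ is exactly the paper's observation that $\langle \overline{x},\overline{y}\rangle\le -1/2$ forces $x+y\in B$ regardless of the radii, and the remaining tail estimate for $\langle u,v\rangle$ via the density $c_D(1-t^2)^{(D-3)/2}$ restricted to a window of width $1/D$ matches the paper's computation. The only differences are cosmetic (conditioning on the radii versus the paper's direct monotonicity argument, and keeping the constant $c_D\asymp\sqrt D$ rather than crudely absorbing it), and both yield the claimed $D^{-O(1)}(3/4)^{D/2}$ bound.
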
The main point is that we improve the $(3/4)^D$ probability that a $D$-dimensional box is closed under addition to $(3/4)^{D/2}$ when using a $D$-dimensional ball. By discretizing the ball, and recreating the sketch from before (now being precise about minor rounding issues) we shall be able to derive Theorem~\ref{main}.

To be precise about potential rounding issues, we shall employ a simple averaging argument. Here, we write $\mu(\cdot)$ to denote the Lebesgue measure. 
\begin{lem}\label{shift lower}
    Let $S\subset \R^D$ be a measurable set. Then there is some shift $t\in [0,1)^D$ so that $|(t+S)\cap \Z^D|\ge \mu(S)$.
\end{lem}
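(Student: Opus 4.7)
The plan is to run a standard first-moment averaging argument: show that the average of $g(t) := |(t+S) \cap \Z^D|$ over $t \in [0,1)^D$ equals exactly $\mu(S)$, so some $t$ must achieve at least this value.

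Concretely, I would write $g(t) = \sum_{z \in \Z^D} \mathbbm{1}[z \in t+S] = \sum_{z \in \Z^D} \mathbbm{1}[z - t \in S]$, and then compute
\[
\int_{[0,1)^D} g(t)\, dt = \sum_{z \in \Z^D} \int_{[0,1)^D} \mathbbm{1}[z - t \in S]\, dt
\]
(swapping sum and integral by Tonelli, since the summand is non-negative). For a fixed $z$, substituting $u = z - t$ turns the inner integral into $\mu\bigl(S \cap (z + (-1,0]^D)\bigr)$. Summing over $z \in \Z^D$, the sets $z + (-1,0]^D$ form a partition of $\R^D$ (up to a null set), so the sum telescopes to $\mu(S)$. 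Since $[0,1)^D$ has Lebesgue measure one, the average of $g$ over $[0,1)^D$ is $\mu(S)$, and hence there must exist some $t \in [0,1)^D$ with $g(t) \ge \mu(S)$.

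I do not foresee any real obstacle: the only subtle points are (i) justifying the sum-integral swap, for which Tonelli suffices because everything is non-negative, and (ii) being careful about which half-open cube to use so that the translates genuinely partition $\R^D$ (here choosing $(-1,0]^D$, equivalently the partition by integer translates of $[0,1)^D$, works cleanly). The function $g$ is integer-valued and measurable (as a countable sum of indicators of measurable sets), so the averaging step is immediate. No results beyond basic measure theory are needed.
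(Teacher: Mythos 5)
Your proposal is correct and is essentially identical to the paper's proof: both compute the expectation of $|(t+S)\cap\Z^D|$ over a uniformly random $t\in[0,1)^D$ via linearity/Tonelli, identify each term with $\mu\bigl(S\cap(z-(0,1]^D)\bigr)$, sum over the partition of $\R^D$ to get $\mu(S)$, and conclude by averaging. No differences worth noting.
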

\begin{proof}
    Pick $t\sim [0,1)^D$ uniformly at random.     For $\xi\in \Z^D$, $\P( \xi \in t+S) = \mu( S\cap \xi-(0,1]^D)$. By linearity of expectation, we get $\E[|(t+S)\cap \Z^D|] = \sum_{\xi\in \Z^D}\P(\xi \in t+S) = \mu(S)$. Whence by averaging, one can fix some outcome of $t$ where $|(t+S)\cap \Z^D|\ge \mu(S)$, as desired.
\end{proof}
\noindent An identical argument yields:
\begin{lem}\label{shift upper}
    Let $S\subset \R^D$ be a measurable set. Then there is some shift $t\in [0,1)^D$ so that $|(t+S)\cap \Z^D|\le \mu(S)$.
\end{lem}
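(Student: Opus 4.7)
The plan is to repeat the argument of Lemma~\ref{shift lower} essentially verbatim, only using the opposite direction of averaging in the last step. Specifically, I would pick $t\in[0,1)^D$ uniformly at random, and compute $\E[|(t+S)\cap \Z^D|]$ by linearity of expectation. For each $\xi\in \Z^D$, the event $\xi\in t+S$ occurs iff $\xi-t\in S$, i.e., iff $t\in \xi-S$ restricted to the unit cube $\xi-(0,1]^D$ (viewed modulo $\Z^D$ this is the same as a uniform $t$ in $[0,1)^D$). Thus $\P(\xi\in t+S)=\mu(S\cap(\xi-(0,1]^D))$.

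Summing over $\xi\in \Z^D$, the unit cubes $\xi-(0,1]^D$ tile $\R^D$, so
\[
\E[|(t+S)\cap \Z^D|]=\sum_{\xi\in \Z^D}\mu(S\cap(\xi-(0,1]^D))=\mu(S).
\]
Since the expectation equals $\mu(S)$, the random variable $|(t+S)\cap \Z^D|$ must take some value that is \emph{at most} its mean (this is simply the reverse averaging statement compared to Lemma~\ref{shift lower}, which selected a value at least the mean). Fixing such an outcome of $t$ yields the desired shift.

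There is no real obstacle here; the only thing to be slightly careful about is making sure the identification of $\P(\xi\in t+S)$ with $\mu(S\cap(\xi-(0,1]^D))$ is stated cleanly, and that we invoke averaging in the correct direction. Since the expectation equals the target value $\mu(S)$ exactly (not just as a bound), both the upper bound and lower bound versions follow from the identical computation.
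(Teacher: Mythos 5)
Your proposal is correct and is exactly the argument the paper intends: the paper simply states that ``an identical argument'' to Lemma~\ref{shift lower} gives the result, namely computing $\E[|(t+S)\cap \Z^D|]=\mu(S)$ and averaging in the opposite direction. No issues.
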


We shall now derive our main result.

\begin{proof}[Proof of Theorem~\ref{new bounds}] Consider some $D\ge 1$ and $n\ge (2D)^{100D}$. We shall show $\eta(3n)\gg D^{-O(1)} (3/4)^{D/2}n^{-2/D}$. To then derive $\eta(n) \gg 2^{-(2\sqrt{\log_2(4/3)}+o(1))\sqrt{\log_2(n)}}$, one simply takes $D_n= \lceil \frac{2}{\sqrt{\log_2(4/3)}}\sqrt{\log_2(n)}\rceil $, and observes $n/3 \ge (2D_n)^{100D_n}$ for all sufficiently large $n$.

Let $B\subset \R^D$ be a Euclidean ball of radius $r$ around the origin, where $r$ is chosen so that $\mu(B) = n$. Very crudely, we have that $r\le D n^{2/D}$. Otherwise $[-n^{1/D},n^{1/D}]^D \subset B$, contradicting $\mu(B)=n$. From this upper bound, we also see that $B\subset [-Dn^{1/D},Dn^{1/D}]^D$.

Now, we consider $S\subset \R^{2D}$, defined to be the set of $(x_1,\dots,x_D,y_1,\dots,y_D)\in \R^{2D}$ so that $x,y,x+y$ all belong to $B$. Proposition~\ref{additive closure of ball} tells us that $\mu(S)\gg  n^2 D^{-O(1)}(3/4)^{D/2}$. By Lemma~\ref{shift lower}, we can find some $t\in [0,1)^{2D}$ so that $|(t+S)\cap \Z^{2D}|\ge \mu(S)$. 

Set $t^{(1)} := (t_1,\dots,t_D),t^{(2)}:= (t_{D+1},\dots,t_{2D})$. We then define $X_0 := (t^{(1)}+B)\cap \Z^D,Y_0 := (t^{(2)} +B)\cap \Z^D, Z_0 := B\cap \Z^D$. We claim $|X_0|,|Y_0|,|Z_0|\le 2n$. Assuming this claim, standard averaging finds $X_1\subset X_0,Y_1\subset Y_0,Z_1 \subset Z_0$ all of size at most $n$, so that \[\#(x\in X_1,y\in Y_1: x+y\in Z_1)\ge (1/8)\#(x\in X_0,y\in Y_0:z\in Z_0) \gg D^{-O(1)} (3/4)^{D/2} n^2.\] And so, noting that $X_1,Y_1,Z_1\subset [-Dn^{1/D}-1,Dn^{1/D}+1]^D\cap \Z^D$, we can translate them to live inside $[M+1]^D$ for $M\ll Dn^{1/D}$. Whence, Lemma~\ref{corner reduction} gives
\[\eta(3n)\gg \frac{D^{-O(1)}(3/4)^{D/2}}{D M^2}\gg \frac{D^{-O(1)}(3/4)^{D/2}}{D^3 n^{2/D}}= D^{-O(1)}(3/4)^{D/2}n^{-2/D}.\]

To finish the proof, we must justify why $X_0,Y_0,Z_0$ each have cardinality at most $2n$. We shall only discuss $X_0$, as the remaining cases are identical.

We have that $X_0 = (t^{(1)}+B)\cap \Z^D$. Set $r' := r+\sqrt{D}$. Let $B'$ be the Euclidean ball of radius $r'$, and note that for any $s\in [0,1)^D$, we have $s+B'\supset B$ (by the triangle inquality, if $v\in B$, then $||v-s||_2\le ||v||_2+||s||_2\le r+D^{1/2}=r'$, meaning $v\in s+B'$).

Now, since we assumed $n>(2D)^{100D}$, we must have that $r\ge D^{50}$, otherwise $B\subset (-D^{100},D^{100})^D$ which has volume less than $n$. Thus, $r'\le (1+D^{-2})r$, meaning $\mu(B')\le (1+D^{-2})^D\mu(B)\le 2n$. Now, we just apply Lemma~\ref{shift upper} to find some $s\in [0,1)^D$ with $|s+t^{(1)}+B'\cap \Z^D|\le \mu(B')\le 2n$. Recalling $X_0 = (t^{(1)}+B)\cap \Z^D$, the inclusion $t^{(1)}+B\subset t^{(1)}+s+B'$ gives $|X_0|\le 2n$ (as required).
\end{proof}

\subsection{The computation}\label{computations}

Throughout this subsection, we let $B$ denote the unit ball in $\R^D$.

We recall the following fact (see e.g., \cite{MO}).
\begin{clm}\label{dot product pdf}
    Let $u,v\sim S^{D-1}$ be random unit vectors in $\R^D$, and consider the random variable $R:= \langle u,v\rangle$. Writing $f_R$ for the pdf of this random variable, we have\footnote{Here $\Gamma(x):= \int_0^\infty t^{x-1}e^{-t}\,dt$ is the gamma function.}
    \[f_R(r) = \frac{\Gamma(D/2)}{\sqrt{\pi}\Gamma((D-1)/2)}(1-r^2)^{\frac{D-3}{2}}\mathbf{1}(|r|<1).\]
\end{clm}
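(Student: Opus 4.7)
The plan is to exploit rotation invariance to reduce the claim to a one-dimensional marginal computation. Since the uniform distribution on $S^{D-1}$ is invariant under orthogonal transformations, I can condition on $v$ and apply a rotation sending $v$ to the first standard basis vector $e_1$; after this rotation the random variable $\langle u, v\rangle$ has the same distribution as $u_1$, the first coordinate of a uniformly random point on the sphere. So the entire claim reduces to computing the density of the first coordinate of a point drawn uniformly from $S^{D-1}$.

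To compute that density, I would parameterize the sphere by $u = (\cos\phi,\, \sin\phi \cdot \omega)$ with $\phi\in[0,\pi]$ and $\omega\in S^{D-2}$. The ``slice'' of $S^{D-1}$ at angle $\phi$ is a copy of $S^{D-2}$ scaled by $\sin\phi$, so the uniform surface measure pushes forward to $c_D \sin^{D-2}\phi\, d\phi\, d\omega$ for some normalization constant $c_D$. Integrating out $\omega$ and changing variables to $r=\cos\phi$ (so $dr = -\sin\phi\, d\phi$) turns the density of $u_1$ into something proportional to $\sin^{D-3}\phi = (1-r^2)^{(D-3)/2}$, supported on $|r|<1$. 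This matches the shape in the claim; all that remains is the normalization.

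The normalizing constant comes from a standard Beta integral: writing $s = r^2$ gives
\[
\int_{-1}^{1}(1-r^2)^{(D-3)/2}\,dr \;=\; \int_0^1 s^{-1/2}(1-s)^{(D-3)/2}\,ds \;=\; B\!\left(\tfrac12,\tfrac{D-1}{2}\right) \;=\; \frac{\sqrt{\pi}\,\Gamma((D-1)/2)}{\Gamma(D/2)},
\]
so the density must be multiplied by $\Gamma(D/2)/(\sqrt{\pi}\,\Gamma((D-1)/2))$, which is exactly the constant asserted. There is no real obstacle here — the only step one must be careful with is the Jacobian when passing between the angular parameter $\phi$ and the linear parameter $r$; once that is handled the result falls out. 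As a sanity check, in $D=3$ the exponent $(D-3)/2$ vanishes and the density becomes the constant $1/2$ on $[-1,1]$, recovering the classical fact that $\langle u,v\rangle$ is uniform on $[-1,1]$ for random unit vectors in $\R^3$.
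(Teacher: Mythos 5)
Your derivation is correct. The paper does not prove this claim at all --- it is quoted as a known fact with a citation to a MathOverflow answer \cite{MO} --- so there is no in-paper argument to compare against; your proof is the standard one (rotation invariance reduces $\langle u,v\rangle$ to the first coordinate of a uniform point on $S^{D-1}$, the slice parameterization gives a density proportional to $(1-r^2)^{(D-3)/2}$, and the Beta integral $B(\tfrac12,\tfrac{D-1}{2})$ supplies the normalizing constant), and every step, including the Jacobian $dr=-\sin\phi\,d\phi$ and the $D=3$ sanity check, is right. The only caveat worth recording is that the formula requires $D\ge 2$ (for $D=1$ the distribution is discrete), which is harmless since the paper only invokes the claim for $D\ge 2$.
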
 \noindent At a high level, we shall argue:
\begin{align}
    \P_{x,y\sim B}(x+y\in B) &\ge \P_{u,v\sim S^{D-1}}(||u+v||_2\le 1) \label{step1}\\ 
    &= \P_{u,v\sim S^{D-1}}(\langle u,v\rangle \le -1/2)\label{step2}\\
    &\gg \int_{-1}^{-1/2} (1-r^2)^{\frac{D}{2}}\,dr \label{step3}\\
    &\gg (1/D)(3/4)^{D/2}.\label{step4}
\end{align}
\noindent It is clear that Eq.~\ref{step2} holds. So it just remains to establish the three other lines.

Let us now justify Eq.~\ref{step1}.
\begin{lem}
    We have
    \[\P_{x,y\sim B}(x+y\in B)\ge \P_{u,v\sim S^{D-1}}(u+v\in B).\]
\end{lem}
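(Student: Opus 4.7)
The plan is to compare the two probabilities pointwise after a polar decomposition, since both $(u,v)$ on the RHS and the angular parts of $(x,y)$ on the LHS have the same distribution on $S^{D-1}\times S^{D-1}$. Specifically, I would write $x = su$ and $y = tv$, where $u,v$ are independent uniform on $S^{D-1}$, and $s,t\in [0,1]$ are the independent radii (with density $D r^{D-1}\,dr$, obtained from polar integration of the uniform measure on $B$); by rotational invariance $(u,v)$ is independent of $(s,t)$. Letting $\rho := \langle u,v\rangle$, one has the two identities
\[
\|x+y\|_2^2 = s^2+t^2+2st\rho, \qquad \|u+v\|_2^2 = 2+2\rho,
\]
so the event $\{u+v\in B\}$ is exactly $\{\rho \le -1/2\}$, while the event $\{x+y\in B\}$ is $\{s^2+t^2+2st\rho \le 1\}$.

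The heart of the proof is then the elementary observation that whenever $\rho\le -1/2$ and $s,t\in[0,1]$, we have
\[
s^2+t^2+2st\rho \ \le\ s^2+t^2-st \ \le\ 1,
\]
where the first step uses $st\ge 0$ and $2\rho\le -1$, and the second is the claim that the quadratic form $Q(s,t):=s^2-st+t^2$ attains its maximum on $[0,1]^2$ at a corner with value $1$. This last fact is immediate: the interior critical point of $Q$ is $(0,0)$ with $Q=0$; on each edge $Q$ is a univariate quadratic whose maximum on $[0,1]$ occurs at the endpoints, giving value $0$ or $1$.

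Consequently the event $\{u+v\in B\} = \{\rho\le -1/2\}$ is (almost surely) contained in the event $\{x+y\in B\}$, regardless of the realized values of the radii $s,t$. Integrating out, this gives
\[
\P_{u,v\sim S^{D-1}}(u+v\in B) \ =\ \P(\rho\le -1/2) \ \le\ \P(s^2+t^2+2st\rho\le 1) \ =\ \P_{x,y\sim B}(x+y\in B),
\]
which is the desired inequality. The argument has no real obstacle: the only content is the pleasant numerical coincidence that the unit-ball bound $Q(s,t)\le 1$ on $[0,1]^2$ matches exactly the threshold $\rho = -1/2$ that governs $u+v\in B$, so the event inclusion goes through without any slack or error terms.
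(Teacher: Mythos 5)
Your proof is correct and is essentially the paper's argument: the paper also couples the two probabilities via the radial decomposition (writing $\overline{x}=x/\|x\|_2$, $\overline{y}=y/\|y\|_2$) and shows the pointwise event inclusion by verifying $\|x+y\|_2^2\le \lambda_x^2-\lambda_x\lambda_y+\lambda_y^2\le 1$ for radii in $[0,1]$. The only cosmetic difference is that you bound $s^2-st+t^2$ by convexity/corner-checking on $[0,1]^2$, whereas the paper does it by assuming WLOG $\lambda_x\ge\lambda_y$ and noting $\lambda_y^2\le\lambda_x\lambda_y$.
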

\begin{proof}
    Let $x,y$ be (independent) uniformly random elements of $B$. Write $\overline{x} := \frac{x}{||x||_2},\overline{y} := \frac{y}{||y||_2}$, and note that these are uniformly random elements of $S^{D-1}$ (which are also independent). It suffices to show that $\overline{x}+\overline{y}\in B$ implies $x+y\in B$. 

    So, suppose $\langle \overline{x},\overline{y}\rangle \le-1/2$. Then $||x+y||_2^2 = ||x||_2^2-2\langle x,y\rangle +||y||_2^2$. Supposing WLOG $\lambda_x:= ||x||_2\ge ||y||_2:= \lambda_y$, this becomes $\lambda_x^2-2\lambda_x\lambda_y\langle\overline{x},\overline{y}\rangle +\lambda_y^2\le \lambda_x^2\le 1$. So $x+y\in B$ (as desired).
\end{proof}

Now, we give an approximation which (together with Claim~\ref{dot product pdf}) immediately implies Eq.~\ref{step3}.
\begin{lem}
    Let $f_R(r)$ be defined as it was in Claim~\ref{dot product pdf}, for some $D\ge 2$. Then 
    \[f_R(r)\gg (1-r^2)^{D/2}\mathbf{1}(|r|<1).\]
\end{lem}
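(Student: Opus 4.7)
The idea is to pull the factor $(1-r^2)^{D/2}$ directly out of the explicit pdf from Claim~\ref{dot product pdf} and absorb the leftover $(1-r^2)^{-3/2}$ together with the gamma prefactor into the $\gg$ constant.

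First I would observe that for $|r|<1$ we have $0<1-r^2\le 1$, and therefore $(1-r^2)^{-3/2}\ge 1$. Writing
\[(1-r^2)^{(D-3)/2} \;=\; (1-r^2)^{-3/2}\cdot (1-r^2)^{D/2} \;\ge\; (1-r^2)^{D/2},\]
and substituting into the formula from Claim~\ref{dot product pdf}, I get at once
\[f_R(r) \;\ge\; \frac{\Gamma(D/2)}{\sqrt{\pi}\,\Gamma((D-1)/2)}\,(1-r^2)^{D/2}\mathbf{1}(|r|<1).\]
So the claim reduces to showing that the prefactor $\Gamma(D/2)/\Gamma((D-1)/2)$ is bounded below by a positive absolute constant uniformly in $D$ (only $D\ge 2$ matters, since $D=1$ would make the original exponent $-1$ and the pdf statement degenerate anyway).

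Next I would invoke a standard half-integer gamma ratio estimate. Setting $y=(D-1)/2$, the ratio $\Gamma(y+1/2)/\Gamma(y)$ is classically known to satisfy $\Gamma(y+1/2)/\Gamma(y) = \sqrt{y}\,(1+O(1/y))$ as $y\to\infty$ (Stirling), and more concretely Gautschi's inequality gives $\Gamma(y+1/2)/\Gamma(y)\ge \sqrt{y}$ for all $y\ge 1/2$. Hence $\Gamma(D/2)/\Gamma((D-1)/2) \gg \sqrt{D}\gg 1$ for all $D\ge 2$, with the remaining finitely many small values of $D$ handled by direct inspection if desired. Combining the two bounds yields $f_R(r)\gg (1-r^2)^{D/2}\mathbf{1}(|r|<1)$.

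\textbf{Main obstacle.} There is essentially no obstacle: the statement is a two-step manipulation combining a pointwise inequality on $(-1,1)$ with a standard asymptotic for a ratio of gamma functions. The only thing one needs to be mildly careful about is ensuring the gamma-ratio lower bound is uniform in $D$, rather than only an asymptotic, which is handled by Gautschi (or by explicit checking for the very smallest $D$).
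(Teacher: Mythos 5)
Your proof is correct and follows essentially the same route as the paper: both pull out the factor $(1-r^2)^{-3/2}\ge 1$ on $|r|<1$ and then lower-bound the gamma prefactor by an absolute constant (the paper does this more crudely, via monotonicity of $\Gamma$ on $[2,\infty)$ for $D\ge 5$ and checking $D=2,3,4$ separately). One small correction: Gautschi/Wendel actually give the \emph{upper} bound $\Gamma(y+1/2)/\Gamma(y)\le\sqrt{y}$ (e.g.\ $\Gamma(3/2)/\Gamma(1)=\sqrt{\pi}/2<1$), the matching lower bound being $y/\sqrt{y+1/2}$; since this is still $\gg\sqrt{y}$, your conclusion is unaffected.
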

\begin{proof}
    Note $\Gamma(x)$ is increasing for $x\ge 2$, and $(1-r^2)^{-3/2}\ge 1$ when $|r|<1$. Thus $f_R(r)\ge \frac{1}{\sqrt{\pi}}(1-r^2)^{D/2}\mathbf{1}(|r|<1)$ for $D\ge 5$. The result follows (after changing the implicit constant to handle $D=2,3,4$). 
\end{proof}
Finally, we compute
\[\int_{-1}^{-1/2}(1-r^2)^{D/2}\,dr \ge (1/D)(1- (1/D-1/2)^2)^{D/2} \ge (1/D) (3/4-1/D)^{D/2}\gg (1/D)(3/4)^{D/2},\]confirming Eq.~\ref{step4}. Stringing these together, we get 
\[\P_{x,y\sim B}(x+y\in B)\gg (1/D)(3/4)^{D/2},\]proving Proposition~\ref{additive closure of ball}.

\section{Conclusion}
As promised in the introduction, we will now establish a limitation of Proposition~\ref{abstract reduction}. Afterwards, we will speculate a bit about future directions.

Recall that $r_3(n)$ denotes the maximum cardinality of a $3$-AP-free subset of $\{1,\dots,n\}$. Let us define $p_3(n)$ to be the maximum of $\P_{x\sim X,y\sim Y}(x+y\in Z\text{ and }x-y\in W)$, over all additive sets $W,X,Y,Z$ where $W$ is $3$-AP-free and $X,Y,Z$ all have cardinality $n$. Proposition~\ref{abstract reduction} showed that $\eta(3n)\ge (1/9)p_3(n)$. 

Now, a simple averaging argument reveals that $p_3(n)\gg \frac{r_3(n)}{n}$. Indeed, fix some $3$-AP-free set $A\subset \{1,\dots,n\}$ of size $r_3(n)$. Take $X=Y=Z= \{1,\dots,n\}$. There are at least $(n/2)^2$ choices of $x\in X,y\in Y$ so that $x+y\in Z$. And for each such $x,y$ and $a\in A$, there exists some choice of $t\in \{-2n,\dots,n-1,n\}$, so that $x-y=a+t$. So by pigeonhole, there exists some choice of $t$ so that we have at least $\frac{(n/2)^2|A|}{3n+1}= \Omega(r_3(n) n)$ choices of $x\in X,y\in Y$ with $x+y\in Z$ and $x-y\in t+A$. Dividing this count by $n^2$ gives a lower bound on $p_3(n)$ (explicitly we get $p_3(n) \ge \frac{r_3(n)}{4(3n+1)}$).

We now observe there is also a bound in the other direction.
\begin{prp}\label{limit of abstract}
    We have that $p_3(n)\ll \left(\frac{r_3(n)}{n}\right)^{\Omega(1)}$.
\end{prp}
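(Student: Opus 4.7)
The plan is to show that large $p_3(n)$ forces the witnessing sets to have strong additive structure; this in turn forces the 3-AP-free set $W$ to intersect the relevant difference set $X-Y$ in a piece which Freiman-models into a short interval of $\mathbb{Z}$, whose size is then bounded by $r_3$. I work in the torsion-free setting indicated by the remark following Proposition~\ref{abstract reduction}.

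\textbf{Setup and BSG refinement.} Let $X, Y, Z, W$ witness $p := p_3(n)$ and set $E := \{(x, y) \in X \times Y : x+y \in Z, \ x-y \in W\}$, so $|E| = p n^2$. Since the projection $(x,y)\mapsto x+y$ sends $E$ into the $n$-element set $Z$, Cauchy-Schwarz yields the additive energy estimate $E(X,-Y) \ge p^2 n^3$. The graph-refinement version of the Balog-Szemer\'edi-Gowers theorem then produces $X' \subseteq X$ and $Y' \subseteq Y$, each of size $\ge p^{O(1)} n$, with $|X' + Y'| \le p^{-O(1)} n$ and $|E \cap (X' \times Y')| \ge p^{O(1)} n^2$. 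Pl\"unnecke-Ruzsa in the torsion-free setting then gives $|X' - Y'| \le p^{-O(1)} n$.

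\textbf{Extracting a 3-AP-free set and Freiman modelling.} Let $W_1 := W \cap (X' - Y')$, which remains 3-AP-free as a subset of $W$. Each $w \in W_1$ is represented at most $n$ times as $x - y$ with $(x, y) \in X' \times Y'$, so $|W_1| \ge |E\cap(X'\times Y')|/n \ge p^{O(1)} n$. By Freiman's theorem (torsion-free), $X' - Y'$ lies inside a generalized arithmetic progression of rank $d = O(\log 1/p)$ and volume $V \le p^{-O(1)} n$. Using rapidly growing scales, this GAP admits a Freiman-2-homomorphism into an interval $[M] \subseteq \mathbb{Z}$ with $M \le 2^d V \le p^{-O(1)} n$; since this map preserves 3-APs, the image of $W_1$ in $[M]$ is 3-AP-free of the same cardinality, yielding $|W_1| \le r_3(M) \le p^{-O(1)} r_3(n)$ (using the trivial $r_3(kn) \le k\, r_3(n)$).

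\textbf{Conclusion and main obstacle.} Combining the two bounds on $|W_1|$ gives $p^{O(1)} n \le p^{-O(1)} r_3(n)$, hence $p \ll (r_3(n)/n)^{\Omega(1)}$, as desired. The delicate step is the Freiman modelling: the rank $d = O(\log 1/p)$ contributes a factor $2^d = p^{-O(1)}$ to the ambient length $M$, and one only obtains a polynomial relationship because Freiman's theorem in $\mathbb{Z}$ controls all parameters by a polynomial in $1/p$. A secondary subtlety is invoking the graph-refinement form of BSG so that the refined $X', Y'$ retain $\ge p^{O(1)} n^2$ pairs of $E$ (not merely small sumset), without which one cannot recover a large $W_1$.
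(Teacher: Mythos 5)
Your overall architecture matches the paper's up to the final step: both arguments use the graph form of Balog--Szemer\'edi--Gowers together with Pl\"unnecke--Ruzsa to extract $X'\subset X$, $Y'\subset Y$ retaining $p^{O(1)}n^2$ pairs of $E$ while $|X'-Y'|$ (indeed $|X'+X'-Y'-Y'|$) is at most $p^{-O(1)}n$, and both then observe that $W\cap(X'-Y')$ is a $3$-AP-free set of size $p^{O(1)}n$ trapped inside a set of very small doubling. The divergence is in how this tension is converted into a bound involving $r_3$. The paper black-boxes Ruzsa's theorem (Theorem~\ref{ruzsa doubling}): a $3$-AP-free $A\subset\Z$ of size $m$ satisfies $|A+A|\ge\frac12(m/r_3(m))^{1/4}m$, which applied to $W':=W\cap(X'-Y')$ immediately forces $m/r_3(m)\le p^{-O(1)}$. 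You instead try to Freiman-model $X'-Y'$ into a short interval and bound $|W_1|$ by $r_3(M)$ directly.

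That modelling step is where your proof breaks. The statement you invoke --- that a set of doubling $K=p^{-O(1)}$ in $\Z$ lies in a GAP of rank $O(\log K)$ \emph{and} volume $K^{O(1)}$ times its size --- is the Polynomial Freiman--Ruzsa conjecture over $\Z$, not Freiman's theorem; your closing justification that ``Freiman's theorem in $\Z$ controls all parameters by a polynomial in $1/p$'' is false. Chang's version gives rank $O(K)$ with volume $e^{O(K^2\log^3K)}|A|$, and Sanders's refinement gives rank $\log^{O(1)}K$ with volume $e^{\log^{O(1)}K}|A|$; either way your ambient length $M$ is only quasi-polynomial in $1/p$ times $n$, and feeding that through the final inequality yields $p\le\exp(-\log^{\Omega(1)}(n/r_3(n)))$, strictly weaker than the claimed $p\ll(r_3(n)/n)^{\Omega(1)}$. (A secondary loss hides behind ``rapidly growing scales'': to make the map a genuine Freiman $2$-isomorphism on $W_1$ --- which you need, since $3$-AP-freeness of the image requires the reverse implication --- the GAP must first be made proper, costing another superpolynomial factor in the volume when $d$ grows.) The fix is to avoid GAP structure theorems entirely: apply Ruzsa's \emph{modelling lemma} to $W_1$ --- since $2W_1-2W_1\subset 2X'-2Y'-2X'+2Y'$ has size $p^{-O(1)}n$ by Pl\"unnecke--Ruzsa, a positive proportion of $W_1$ is Freiman $2$-isomorphic to a subset of $[p^{-O(1)}n]$, giving $|W_1|\ll p^{-O(1)}r_3(n)$ with genuinely polynomial losses --- or simply quote Ruzsa's sumset theorem as the paper does, whose proof is essentially this modelling argument.
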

To this end, we recall an old theorem of Ruzsa.
\begin{thm}\label{ruzsa doubling}
    Let $A\subset \Z$ be a $3$-AP-free set of size $n$. Then $|A+A|\ge (1/2)(n/r_3(n))^{1/4} n$.
\end{thm}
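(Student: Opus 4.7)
My plan is to combine the Pl\"unnecke--Ruzsa inequality with Ruzsa's Freiman modelling lemma. The key observation is that a $3$-term AP $\{a,b,c\}$ is characterised by the equation $a+c = b+b$, which involves at most two summands on each side; consequently, $3$-AP-freeness is preserved under Freiman-$2$-isomorphisms. So it will suffice to Freiman-$2$-model $A$ inside an ambient cyclic group of size $O(K^4 n)$, where $K := |A+A|/n$, and then invoke the definition of $r_3$.

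First, Pl\"unnecke--Ruzsa immediately gives $|2A-2A|\le K^4 n$. Next, I apply Ruzsa's modelling lemma (for $s=2$) to produce a Freiman-$2$-isomorphism from $A$ to some $A'\subset \Z/N\Z$ with $N\le |2A-2A|\le K^4 n$. Since the relation $a+c=b+b$ is preserved under this map, $A'$ is $3$-AP-free in $\Z/N\Z$; and on regarding $A'\subset\{0,1,\dots,N-1\}$ as a subset of $\Z$, it remains $3$-AP-free, because any $3$-AP in $\Z$ all of whose terms lie in $\{0,\dots,N-1\}$ is also a $3$-AP mod $N$. Hence $n = |A'| \le r_3(N) \le r_3(K^4 n)$.

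To finish, I will use the elementary subadditivity $r_3(M)\le \lceil M/n\rceil \cdot r_3(n)$, which follows by partitioning $\{0,\dots,M-1\}$ into consecutive blocks of length $n$ and applying $r_3$ to each. Taking $M=K^4 n$ and using $K\ge 1$, this gives $r_3(K^4 n)\le (K^4+1)r_3(n)\le 2K^4 r_3(n)$. Rearranging $n\le 2K^4 r_3(n)$ then yields $K\ge (n/(2r_3(n)))^{1/4} \ge (1/2)(n/r_3(n))^{1/4}$, as required.

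The only non-routine ingredient is the modelling lemma itself; Pl\"unnecke--Ruzsa, the subadditivity of $r_3$, and the Freiman preservation of $3$-APs are all standard. If I had to reprove modelling from scratch, the main step would be to pick a prime $p$ slightly larger than the diameter of $2A-2A$ (so that reduction modulo $p$ is automatically a Freiman-$2$-isomorphism on $A$), and then average over dilations $\lambda \in (\Z/p\Z)^{\ast}$ to find one that concentrates $\lambda A \bmod p$ inside an interval of length at most $|2A-2A|$.
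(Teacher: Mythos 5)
The paper does not prove this theorem; it quotes it as a black box from Ruzsa's 1992 paper \cite{ruzsa}, so there is no in-paper argument to compare against. Your proposal is essentially the standard (indeed, essentially Ruzsa's original) proof: Pl\"unnecke--Ruzsa to control $|2A-2A|$, Freiman $2$-modelling into $\Z/N\Z$ with $N\le K^4 n$, the observation that $3$-AP-freeness is a Freiman-$2$ invariant and descends from $\Z/N\Z$ to $\{0,\dots,N-1\}$, and subadditivity of $r_3$. All of those steps are sound.

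One imprecision to fix: Ruzsa's modelling lemma for Freiman $s$-isomorphisms does not model all of $A$; it only produces a subset $A''\subset A$ with $|A''|\ge |A|/s$ that is Freiman $s$-isomorphic to a subset of $\Z/N\Z$ (the loss of a factor $s$ is intrinsic to the proof you sketch at the end --- after the random dilation one must restrict to the preimage of one of $s$ intervals of length $p/s$ to make reduction mod $N$ a Freiman isomorphism). So for $s=2$ your chain of inequalities should start from $n/2\le r_3(N)$ rather than $n\le r_3(N)$. This costs you a factor of $2$, giving $n\le 4K^4 r_3(n)$ and hence $K\ge 4^{-1/4}(n/r_3(n))^{1/4}=2^{-1/2}(n/r_3(n))^{1/4}$, which is still at least $(1/2)(n/r_3(n))^{1/4}$. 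So the stated bound survives, but only because the constant $1/2$ has enough slack; as written, your final numerology silently relies on a strengthening of the modelling lemma that is not available.
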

 We also need a somewhat precise form of the Balogh-Szemer\'edi-Gowers lemma, combined with the Ruzsa triangle inequality. We note that such formulations originally arose from work of Bourgain on the Kakeya conjecture \cite{bourgain}.
 \begin{lem}\label{kakeya BSG}
 Let $A,B$ be additive sets each with cardinality $n$, and let $E\subset A\times B$. Suppose $|E|\ge cn^2$ and 
 \[|\{a+b: (a,b)\in E\}|\le n.\]
 Then we can find $A'\subset A,B'\subset B$ so that $|A'+A'- B'-B'|\ll (1/c)^{O(1)} n$ and $|E\cap (A'\times B')|\gg c^{O(1)}n^2$.
 \end{lem}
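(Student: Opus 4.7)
The plan follows the classical two-step template: apply the Balogh--Szemer\'edi--Gowers (BSG) theorem to refine $A, B$ down to subsets $A' \subset A, B' \subset B$ with small ordinary sumset $|A'+B'|$, then invoke the Pl\"unnecke--Ruzsa inequality to upgrade to a bound on the four-fold signed sumset $|A'+A'-B'-B'|$.

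For the first step, I view $E$ as a bipartite graph of density $\geq c$ between $A$ and $B$, with the additive hypothesis that the partial sumset $E_+ := \{a+b : (a,b)\in E\}$ has size at most $n = \sqrt{|A||B|}$. A quantitative form of BSG (see, e.g., Tao--Vu, \emph{Additive Combinatorics}, Thm.~6.10, or the Bourgain-style exposition in \cite{bourgain}) yields $A' \subset A$, $B' \subset B$ with
\[
|A'|, |B'| \gg c^{O(1)} n, \qquad |A' + B'| \leq c^{-O(1)} |E_+| \leq c^{-O(1)} n,
\]
together with the density-preservation clause $|E \cap (A' \times B')| \gg c^{O(1)} n^2$. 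The third clause requires a short averaging step on top of the standard BSG proof (selecting a typical sub-refinement).

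For the second step, I set $K := |A'+B'|/|A'| \ll c^{-O(1)}$. The multi-set Pl\"unnecke--Ruzsa inequality (obtained by combining Petridis's proof of Pl\"unnecke with the Ruzsa triangle inequality) yields
\[
|kA' + lB' - mA' - nB'| \leq K^{O(k+l+m+n)}|A'|
\]
for all non-negative integers $k,l,m,n$. Taking $(k,l,m,n) = (2,0,0,2)$ gives $|A'+A'-B'-B'| \leq K^{O(1)} |A'| \ll c^{-O(1)} n$, which together with the density bound from Step~1 is exactly what is asserted.

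The main obstacle is coordinating Step~1 so that all three BSG conclusions (refinement size, small sumset, and density preservation) hold simultaneously for the \emph{same} pair $(A', B')$. Most textbook formulations deliver the first two clauses directly; the density preservation clause is then secured by a routine random-refinement argument at the end of the BSG proof. With Step~1 in hand, Step~2 is mechanical and incurs only polynomial losses in $c$, matching the claimed bounds.
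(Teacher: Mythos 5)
Your proposal matches the paper's own route: the paper likewise invokes a quantitative BSG statement with the edge-preservation clause built in (Tao--Vu, Exercise 6.4.10, which outputs $A',B'$ with $|E\cap(A'\times B')|\gg c^{O(1)}n^2$ and a small signed sumset $|A'-B'|\ll (1/c)^{O(1)}n$), and then upgrades to $|A'+A'-B'-B'|$ via Pl\"unnecke--Ruzsa (Tao--Vu, Proposition 2.27(v)). The only cosmetic difference is whether BSG is quoted as controlling $|A'+B'|$ or $|A'-B'|$; either feeds into Pl\"unnecke--Ruzsa with polynomial losses, so the argument is essentially identical.
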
 \noindent The above can be derived from exercises in Tao-Vu. Specifically \cite[Exercise 6.4.10]{taovu} finds sets $A',B'$ where $|E\cap (A'\times B')|$ is sufficiently large and $|A'-B'|\ll (1/c)^{O(1)}|A'|^{1/2}|B'|^{1/2}$. From here, the second condition can be strengthened to get the desired upper bound on $|A'+A'-B'-B'|$ by using \cite[Proposition 2.27(v)]{taovu} (a form of the Plunnecke-Ruzsa inequality).

\begin{proof}[Proof of Proposition~\ref{limit of abstract}]
    Suppose $p_3(n) = p$. Fix additive sets $W,X,Y,Z$ demonstrating this bound. WLOG, we may assume our sets live in $\Z$, as otherwise we can take a generic homomophism (since we are torsion-free).

    Let $E\subset X\times Y$ be the set of pairs $(x,y)$ so that $x+y\in Z$ and $x-y\in W$. We have $|E|= pn^2$. Also,
    \[\{x+y:(x,y)\in E\}\subset Z.\]Recalling $|X|=|Y|=|Z| =n$, we can apply Lemma~\ref{kakeya BSG} with $c:= p$ (and $A:=X,B:= Y$) to find $X'\subset X,Y'\subset Y$ obeying
    \[|E\cap (B'\times Y')|\ge p' n^2\quad \text{and}\quad |X'+X'-Y'-Y'|\le Kn\]where $p'\gg p^{O(1)}$ and $K\ll (1/p)^{O(1)}$. Let $W':= W\cap (X'-Y')$. By pigeonhole, we can find some $x'\in X'$ with $p'n$ choices of $y'\in Y'$ satisfying $(x',y')\in E$; as $y'-y'_1\neq x'-y'_2$ for distinct $y_1',y_2'$, we get $|W'|\ge p'n$.  

    For convenience, write $n':= p'n$. WLOG, let us assume $|W'|= n'$ (deleting a few extra elements if necessary). Since $W'\subset X'-Y'$, we get that $|W'+W'|\le |X'+X'-Y'-Y'| \le Kn= (K/p')|W'|$. Since $W'\subset W\subset \Z$ and $W$ is $3$-AP-free, Theorem~\ref{ruzsa doubling} gives $\frac{|W'+W'|}{|W'|}\ge (1/2) (n'/r_3(n'))^{1/4}$. Rearranging things, we get that
    \[\frac{n'}{r_3(n')}\le 16 (K/p')^4 \ll (1/p)^{O(1)}.\] To finish, it will suffice to show that $\frac{n}{r_3(n)}\ll (\frac{n'}{r_3(n')})^{O(1)}$. 
    
    But this is mundane. Recalling $p_3(n)\gg r_3(n)/n$, we get that $n'\gg (r_3(n)/n)^{O(1)} n$. Combined with the fact that $r_3(n)/n\ge n^{-o(1)}$ (cf. \cite{behrend}), we get that $n'\ge \sqrt{n}$ for all large $n$. We finally claim that $\frac{r_3(n)}{n}\gg \left(\frac{r_3(n')}{n'}\right)^2$. First note that $r_3(100n'^2)\ge r_3(n')^2$ since if $A\subset [n']$ is $3$-AP-free, so is $\{a_1+10n'a_2:a_1,a_2\in A\}$. And then since $100n'^2\ge n$, one can show that 
    \[\frac{r_3(n)}{n}\gg \frac{r_3(100n'^2)}{100n'^2}\](fix any set $A\subset [100n'^2]$ and simply pick $t\in \{-100n'^2,\dots,100n'^2\}$ randomly; from expectations there will exist some outcome $t$ where $|(t+A)\cap [n]|\ge \frac{|A|}{200n'^2+1}n$). This establishes $\frac{n}{r_3(n)} \ll (\frac{n'}{r_3(n')})^{O(1)}$, which in turn establishes $\frac{n}{r_3(n)}\ll (1/p)^{O(1)}$, completing our proof.
\end{proof} 

\subsection{Final thoughts}

In light of Proposition~\ref{limit of abstract}, it seems almost certain that Proposition~\ref{abstract reduction} cannot dramatically improve the bounds for TRL (much to the dismay of any optimists\footnote{The author does not hold this view, though we know a person or two who does.} who believe (say) $\delta(\ep)\le \exp(-\poly(1/\ep))$). Indeed, the celebrated breakthrough work of Kelley and Meka \cite{KM} established that $\frac{r_3(n)}{n}\le \exp(-\Omega(\log^{1/12}(n)))$, so the same shape of bound will hold for $p_3(n)$ (only changing the constant hidden by $\Omega$). 

Additionally in very recent work, Jaber, Liu, Lovett, Osthuni, and Sawhney established quasipolynomial bounds for the corners problem. Recalling $r_{\angle}(n)$ is the maximum cardinality of a corner-free subset of $[n]^2$, the main result of their paper is that $r_\angle(n) \ll \exp(-\Omega(\log^{1/1200}(n)))n^2$ (cf. \cite[Corollary~1.7]{JLLOS})! While the work of our paper did not improve the best bounds for $r_\angle(n)$ per se, one can easily embed our $D$-dimensional construction to get a corner-free set inside $[2^Dn]^2$. And in fact, a generalization of the arguments in Proposition~\ref{limit of abstract} shows that if $A\subset \Z^2$ is a corner-free set with $\max\{|\{x:(x,y)\in A\}|,|\{y:(x,y)\in A\}|,|\{x+y:(x,y)\in A\}|\}\le n$, then $|A|/n^2\ll \left(\frac{r_\angle(n)}{n^2}\right)^{\Omega(1)}$.

Of course, there is still potential for someone to show (say) $\eta(n)\le \exp(-o(\sqrt{\log n}))$ via the corners problem, which the author would find incredibly interesting. And for those who hope to show that $\eta(n)$ does not have quasipolynomial bounds, there is still the `multicolored' variants of additive problems like $r_3(n)$ or $r_{\angle}(n)$. The interested reader is referred to \cite{CFTZ} and \cite{KSS}, which get nearly tight bounds for multicolored problems in the finite field setting.

\hide{With this in mind, we think it might be time to pursue some other variants. We shall highlight a few lesser known problems here.

\begin{defn}
    Given $n,r\ge 1$, let $t(n,r)$ be the largest $t$, so that there exists a graph $G$ on $n$ vertices, with $t$ matchings $M_1,\dots,M_t$, so that:
    \begin{itemize}
        \item $|E(M_i)|\ge r$ for all $i\in [t]$;
        \item $E(M_i)\cap E(M_j) = \emptyset$ for all $i\neq j\in [t]$;
        \item each $M_i$ is induced inside $G$.
    \end{itemize}
\end{defn}One easily gets that if $ t(n_0,r)\ge t_0$, then $\eta(n_0+t)\ge \frac{rt}{2(n_0+t)^2}$. Indeed, start with $G_0$ with $E(G_0) = E(M_1)\cup \dots E(M_{t_0})$. We may pass to a bipartite subgraph $G_0'\subset G_0$ which keeps at least half the edges of $G_0$. Then for each $M_i$, add a new vertex $v_i$ with neighborhood $V(M_i)$. This gives a graph $G$ on $n_0+t_0$ vertices. One can check from definitions that each $e\in E(G_0')$ is contained in exactly one triangle, meaning $G$ has at least $|E(G_0')|\ge rt/2$ triangles. It is also not hard to check each edge from $E(G)\setminus E(G_0')$ is blah blah.\zh{complete; atm not every is in a triangle because we delete some edges to pass to $G_0'$.}

Also a dyadic pigeonhole argument shows that for any $n$, there exists some $r$ so that $r\cdot \min\{t(n/2,r),n\} \gg \eta(n)\frac{n^2}{\log(n)}$. From this perspective, understanding $\eta(n)$ essentially reduces to understanding $\Tilde{\eta}(n) := \frac{1}{n}\max_r\{\min\{r,t(n,r)\}\}$, as one gets
\[\Tilde{\eta}(n/2)^2\le \eta(n)\ll \log(n)\Tilde{\eta}(n/2).\] 

But one can ask what happens when $r$ is a bit larger.
\begin{prob}
    Fix $c>0$, how does $t(n,cn)$ grow?
\end{prob}It is known that for $c<1/4$, we have $t(n,cn)\ge n^{\Omega(1/\log\log(n))}$ \cite{fischer et al}. But it remains open whether there exists $c>0$ so that $t(n,cn)\ge n^{\Omega(1)}$. Understanding this problem has relevance to computer science (cf. \cite{pratt}). 

Concerning upper bounds, for any fixed $c$ the function $t(n,cn)$ should be sublinear in $n$, due to its connections to $\eta(n)$ and TRL. Na\"ively black-boxing things yields a bound of the form $t(n,cn)=O_c(\eta(n)n)$; since we currently have very poor bounds for the LHS, one might naturally strive to improve this. In 2015, progress in this direction was made by Fox, Huang, and Sudakov \cite{FHS}. There they note a modification of \cite{fox} shows that $\frac{n}{t(n,cn)}$ grows faster than the $k$-fold iterated logarithm for some $k=O(\log(1/c))$, and give some refined bounds assuming $c>1/5$. To this day, this remains the state of the art.}


\begin{thebibliography}{}
\bibitem{behrend} F. A. Behrend, \textit{On sets of integers which contain no three in arithmetic progression,} in
\textit{Proceedings of the National Academy of Sciences} \textbf{32} (1946), p. 331-332.

\bibitem{bourgain} J. Bourgain, \textit{On the Dimension of Kakeya Sets and Related Maximal Inequalities,} in \textit{Geometric and Functional Analysis} \textbf{9} (1999), p. 256-282.

\bibitem{CFTZ} M. Christandl, O. Fawzi, H. Ta, and J. Zuiddam, \textit{Larger Corner-Free Sets from Combinatorial Degenerations,} in 13th Innovations in Theoretical Computer Science Conference (ITCS 2022). Leibniz International Proceedings in Informatics (LIPIcs), Volume 215.


\bibitem{elkin} M. Elkin, \textit{An improved construction of progression-free sets,} in \textit{SODA 10’} (2010), p. 886-905.

\bibitem{EHPS} C. Elsholtz, Z. Hunter, L. Proske, and L. Sauermann, \textit{Improving Behrend's construction: Sets without arithmetic progressions in integers and over finite fields,} preprint (June 2024), 15 pp. \url{https://arxiv.org/abs/2406.12290}

\bibitem{fischer et al}  E. Fischer, E. Lehman, I. Newman, S. Raskhodnikova, R. Rubinfeld, and
A. Samorodnitsky, \textit{Monotonicity testing over general poset domains,} in \textit{Proceedings of
the thirty-fourth annual ACM symposium on Theory of computing} (2002), p. 474–483.

\bibitem{fox} J. Fox, \textit{A new proof of the graph removal lemma,} in \textit{Annals of Mathematics} \textbf{174} (2011), p. 561–579.

\bibitem{FHS} J. Fox, H. Huang, and B. Sudakov, \textit{On graphs decomposable into induced matchings of linear sizes,} in \textit{Bulletin of the London Mathematical Soceity} \textbf{49} (2016), p. 45-57.

\bibitem{green} B. J. Green, \textit{Lower bounds for corner-free sets,} in \textit{New Zealand Journal of Mathematics} \textbf{51} (2021), p. 1-2.
\bibitem{greenwolf} B. J. Green and J. Wolf, \textit{A note on Elkin’s improvement of Behrend’s construction,} in \textit{Additive Number Theory}, p. 141–144, Springer, New York 2010.

\bibitem{JLLOS} M. Jaber, Y. Liu, S. Lovett, A. Osthuni, and M. Sawhney, \textit{Quasipolynomial bounds for the corners theorem,} preprint (April 2025), 85 pp. \url{https://arxiv.org/abs/2504.07006} 

\bibitem{KM} Z. Kelley and R. Meka, \textit{Strong bounds for 3-Progressions,} in \textit{IEEE 64th Annual Symposium on Foundations of Computer Science (FOCS)} (2023), p. 933-973.

\bibitem{KSS} R. Kleinberg, D. Speyer, and W. Sawin, \textit{The growth rate of tri-colored sum-free sets,} in \textit{Discrete Analysis} \textbf{12} (2018), 10 pp.

\bibitem{linialshraibman} N. Linial and A. Shraibman, \textit{Larger Corner-Free Sets from Better NOF Exactly-$N$ Protocols,} in \textit{Discrete Analysis} \textbf{19} (2021), 9 pp.

\bibitem{MO} I. Pinelis, \textit{Answer to ``Scalar product of random unit vectors'',} \url{https://mathoverflow.net/a/361614/130484}, 2020.

\bibitem{pratt} K. Pratt, \textit{A note on Ordered Ruzsa-Szemer\'edi graphs,} preprint (February 2025), 4 pp. \url{https://arxiv.org/abs/2502.02455}

\bibitem{ruzsa} I. Ruzsa, \textit{Arithmetical progressions and the number of sums,} in \textit{Periodica Mathematica Hungarica} \textbf{25} (1992), p. 105-111.

\bibitem{taovu} T. Tao and V. Vu, \textit{Additive Combinatorics,} Cambridge Stud. Adv. Math.,
105 Cambridge University Press, Cambridge, 2006, xviii+512 pp.

\end{thebibliography}
\end{document}